\def \r{\mathbb R}
\def \q{\mathbb Q}
\def \z{\mathbb Z}
\def\calT{\mathcal T}
\def\LaZ{\Lambda_{\z}}
\def\LaQ{\Lambda_{\q}}
\DeclareMathOperator{\modd}{mod}
\DeclareMathOperator{\isin}{lsin}
\DeclareMathOperator{\icos}{lcos}
\DeclareMathOperator{\itan}{ltan}
 \DeclareMathOperator{\id}{ld}
\DeclareMathOperator{\il}{l\ell} \DeclareMathOperator{\is}{lS}
\DeclareMathOperator{\lcm}{lcm} 
\DeclareMathOperator{\aff}{Aff} \DeclareMathOperator{\SL}{SL} \DeclareMathOperator{\GL}{GL}
\newtheorem{theorem}{Theorem}[section]
\newtheorem{lemma}[theorem]{Lemma}
\newtheorem{proposition}[theorem]{Proposition}
\newtheorem{corollary}[theorem]{Corollary}
\theoremstyle{remark}
\newtheorem{remark}[theorem]{Remark}
\theoremstyle{definition}
\newtheorem{definition}[theorem]{Definition}
\newtheorem{example}[theorem]{Example}
\author[O.~Karpenkov, A.~Pratoussevitch, R.~Sheppard]{Oleg Karpenkov, Anna Pratoussevitch, Rebecca Sheppard}
\address{Department of Mathematical Sciences\\ University of Liverpool\\ Peach Street \\ Liverpool L69~7ZL.}
\title{Circumscribed Circles in Integer Geometry}
\begin{document}

\begin{abstract}
Integer geometry on a plane deals with objects whose vertices are points in~$\z^2$.
The congruence relation is provided by all affine transformations preserving the lattice~$\z^2$.
In this paper we study circumscribed circles in integer geometry.
We introduce the notions of integer and rational circumscribed circles of integer sets.
We determine the conditions for a finite integer set to admit an integer circumscribed circle
and describe the spectra of radii for integer and rational circumscribed circles.
\end{abstract}

\maketitle

\tableofcontents

\section*{Introduction}
In this paper we introduce the notion of circumscribed circles in integer geometry
and investigate their properties.

\vspace{2mm}

The integer distance between two points in the lattice~$\z^2$ is defined in terms of the number of lattice points on the segment between them; see Section~\ref{Some Integer Invariants} for more details.
An {\it integer circle\/} is the locus of all lattice points at a fixed integer distance from a given lattice point.
The properties of integer circles differ substantially from the properties of their Euclidean counterparts.
In fact, using the Basel Problem~\cite{ayoub}, it can be shown that the density of a unit integer circle in~$\z^2$ is positive and equal to~$6/\pi^2$ (see also~\cite{HW2008})
Note that the chords of unit integer circles provide a tessellation which is combinatorially equivalent to the Farey tessellation of the hyperbolic plane, while their radial segments correspond to geodesics in the hyperbolic plane (see~\cite{Series2015,MGO2019}). 

\vspace{2mm}

An {\it integer circumscribed circle\/} of a subset of~$\z^2$ is defined as an integer circle that contains this subset.
While in Euclidean geometry every non-degenerate triangle has a unique circumscribed circle,
this is no longer the case in integer geometry.
In fact, the number of integer circumscribed circles of an integer triangle is infinite.

\vspace{2mm}

This paper aims to provide a comprehensive study of circumscribed circles in integer geometry.
In Theorem~\ref{theorem: finite set and torus} we introduce necessary and sufficient conditions for a finite integer set to admit a circumscribed circle.
As a special case, we discuss the circumscribed circles of integer quadrangles and their Euclidean counterparts. 

\vspace{2mm}

While a finite set might not admit an integer circumscribed circle,
it will have integer dilates that do.
The integer circumscribed circles of the dilates can be interpreted as integer circles with rational centres and radii.
We call the set of all such rational radii the {\it rational spectrum}.
In Theorem~\ref{theorem:rational circumscribed} we describe the structure of rational spectra of finite sets. 

\vspace{2mm}

\noindent
This paper is organized as follows.
In Section~\ref{Basic Notions of Integer Geometry}, we begin with basic definitions of integer geometry and introduce the notion of an integer circle.
In Section~\ref{Integer Circumscribed Circles} we state and prove the conditions under which a finite integer set admits an integer circumscribed circle.
We extend the notion of circumscribed circles to the case of rational radii and rational centers  and describe the spectra of the radii of such circles in Section~\ref{Rational Circumscribed Circles}. 
In Section~\ref{Section-polygons} we discuss integer and rational circumscribed circles for segments, triangles and quadrangles in more detail.
%

\section{Basic Notions of Integer Geometry}
\label{Basic Notions of Integer Geometry}

\subsection{Objects in Integer Geometry}
Consider the plane $\r^2$ with the fixed basis~$(1,0),(0,1)$.
A point in~$\r^2$ is  {\it integer\/} if its coordinates in this basis are integers.
An {\it integer vector\/} is a vector with integer endpoints.
A segment or polygon is {\it integer\/} if all its vertices are integer.
A subset of $\z^2$ is said to be an {\it integer set}.

\vspace{2mm}

An {\it integer affine transformation\/} is an affine transformations that preserves the integer lattice~$\z^2$.
We denote the set of all integer affine transformations by~$\aff(2,\z)$.
Similar to the Euclidean isometries,
$\aff(2,\z)$ contains {\it integer translations},
{\it integer rotations\/}  
and {\it integer symmetries}. 
They correspond to translations by integer vectors,
multiplication by matrices in~$\SL(2,\z)$ 
and multiplication by matrices in $\GL(2,\z){\setminus}\SL(2,\z)$ respectively.

\vspace{1mm}

We say that two integer sets are {\it integer congruent\/} if there exists an integer affine transformation sending one set to another.

\subsection{Some Integer Invariants}
\label{Some Integer Invariants}

Let us recall some basic notions of integer geometry (see~\cite{OK2022}).
The {\it integer length}~$\il(AB)$ of a vector $AB$ in $ \z^2$ is defined as the number of lattice points that the vector passes through, minus one. 
Note that the integer length is given by the greatest common divisor of the differences of coordinates.
The {\it integer distance}~$\id(AB)$ between~$A$ and~$B$ is the integer lengths of~$AB$.

\vspace{2mm}

The {\it integer area}~$\is(ABC)$ of a triangle~$ABC$ is the index of the sub-lattice generated by~$AB$ and~$AC$ in~$\z^2$.
In fact, the integer area is equal to the absolute value of the determinant~$\det(AB,AC)$,
and therefore it is twice the Euclidean area of the triangle~$ABC$.

\subsection{Integer Circles}

We define an \textit{integer circle\/}
with centre~$O\in\z^2$ and radius~$r\in\z$, $r>0$
as the locus of all points~$P$ such that~$\il(OP)=r.$

\begin{proposition}
The intersection of a line with a circle is either empty, or an infinite periodic subset of integer points in the line, or two points.  
\qed
\end{proposition}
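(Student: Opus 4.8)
The plan is to first normalise by an integer translation so that the centre is the origin; the circle then becomes $C_r=\{P\in\z^2:\il(P)=r\}$, where $\il(P)=\gcd(P_x,P_y)$. Fix a line $L$. If $L$ has irrational slope it contains at most one integer point and hence meets $C_r$ in at most one point, so the content of the statement lies in the case of a rational line. For such a line the integer points are exactly $P_k=P_0+kv$ with $k\in\z$, where $v$ is a primitive integer vector spanning the direction of $L$. Writing $f(k)=\il(P_k)$, the intersection $L\cap C_r$ is the image of the solution set $\{k\in\z:f(k)=r\}$, and everything reduces to counting these solutions.

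The key step is a single determinant identity. Since $\det(P_{k},v)=\det(P_0+kv,v)=\det(P_0,v)$, the integer $D:=\det(P_k,v)$ does not depend on $k$. Writing $P_k=\il(P_k)\,u_k$ with $u_k$ primitive gives $D=\il(P_k)\det(u_k,v)$, so that $\il(P_k)$ divides $D$ for every $k$. Note that $D=0$ exactly when $P_0$ is parallel to $v$, i.e.\ when the origin lies on $L$; I would split the argument according to whether $D\neq 0$ (centre off the line) or $D=0$ (centre on the line).

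When $D\neq 0$ the divisibility $\il(P_k)\mid D$ both bounds $f$ and yields periodicity. Indeed, for any divisor $d$ of $D$ one has $d\mid P_{k+D}$ if and only if $d\mid P_k$, because $P_{k+D}=P_k+Dv$ and $d\mid Dv$; since $f(k)=\il(P_k)$ is the largest integer dividing the vector $P_k$ and is itself a divisor of $D$, this shows $f(k+D)=f(k)$ for all $k$. Hence $f$ is periodic with period dividing $|D|$, so the solution set $\{k:f(k)=r\}$ is a union of residue classes modulo $|D|$: it is either empty or infinite, and in the latter case its image is a periodic subset of the integer points of $L$, giving the first two alternatives.

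Finally, when $D=0$ the origin lies on $L$; reindexing so that $P_0=O$, the integer points are $P_k=kv$ and $f(k)=\il(kv)=|k|$, whence $f(k)=r$ has exactly the two solutions $k=\pm r$. This produces the remaining alternative of two points. I expect the main obstacle to be isolating the determinant identity $\il(P_k)\mid\det(P_k,v)$: it is what simultaneously forces the integer length along the line to be bounded and to be periodic in $k$, and without it the values $\gcd((P_k)_x,(P_k)_y)$ look unstructured.
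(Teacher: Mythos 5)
Your proof is correct, but there is nothing in the paper to compare it against: the proposition is stated with a q.e.d.\ symbol and no argument, treated as immediate, so your write-up supplies a proof the paper omits. Your mechanism is the right one. The observation that $D=\det(P_k,v)=\det(P_0,v)$ is independent of $k$, that $\il(OP_k)$ divides $D$ (via $P_k=\il(OP_k)\,u_k$ with $u_k$ primitive), and hence that $k\mapsto\il(OP_k)$ is periodic with period dividing $|D|$, cleanly yields the dichotomy empty/infinite-periodic when the centre is off the line, while $D=0$ reduces to $\il(O,kv)=|k|$ and gives exactly the two points $\pm rv$; this matches the paper's remark that radial lines meet the circle in two points. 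It is worth noting that since $v$ is primitive, $|D|$ is precisely the integer distance $\id(O,L)$ from the centre to the line, so your divisibility bound $\il(OP_k)\mid D$ ties directly to the paper's definition of tangency ($\id(O,\ell)=r$): nonempty intersection forces $r\mid\id(O,L)$. Two small caveats, both about scope rather than correctness: first, for a line of irrational slope passing through a single lattice point of the circle the intersection is one point, a fourth alternative not listed in the trichotomy, so the proposition must implicitly mean lines with integer direction --- you noticed the ``at most one integer point'' case but folded it silently into the statement, and one sentence making that convention explicit would tighten the argument; second, your parametrisation $P_k=P_0+kv$ tacitly assumes the line contains a lattice point at all, and the (trivial) empty case of a rational line missing $\z^2$ should be mentioned for completeness.
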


The {\it radial} line of an integer circle~$C$ is a line passing through the center of~$C$. 
A radial line of~$C$ intersects~$C$ in two points.
A line $\ell$  is {\it tangent} to the circle $C$ of radius $r$ with center $O$ if $\id(O,\ell)=r$.

\begin{remark}
For every pair of tangent lines of an integer circle there exists an integer isometry of the circle mapping one tangent line to the other. 
\end{remark}

\begin{remark}
Two integer circles of the same radius are integer congruent.
Moreover, one can be mapped to the other by a translation by an integer vector.
\end{remark}

\begin{figure}
\[\includegraphics[scale = 1]{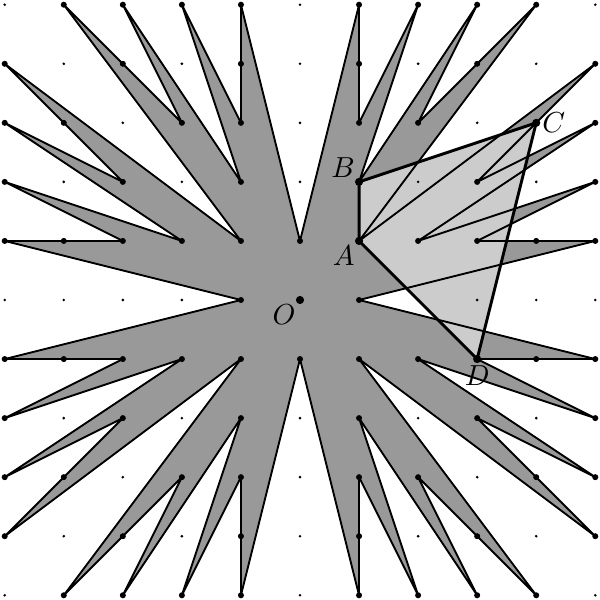}\]
\caption{An integer circle circumscribed about an integer quadrangle.}
\label{fig:starburst}
\end{figure}

\noindent
Figure~\ref{fig:starburst} shows in bold those points of the integer unit circle~$S_0$ centered at the origin~$O$
whose coordinates do not exceed~$5$ in absolute value.
The polygon in Figure~\ref{fig:starburst} is called a {\it Farey starburst} and is obtained by connecting these points by straight segments in the order of increasing argument.
The vertices~$A,B,C,D$ belong to the integer circle~$S_0$,
hence $S_0$ is a circumscribed circle of the quadrangle~$ABCD$.

\begin{remark}
Consider the integer unit circle~$S_0$ centered at the origin~$O$.
Let $\alpha$ be some integer angle and $A$ the point~$(1,0)$.
Then it is possible to find infinitely many points~$B$ in~$S_0$ such that 
the angle $\angle AOB$ is integer congruent to~$\alpha$.
Note the difference with the Euclidean case,
where there are exactly two such points~$B$.
\end{remark}

\subsection{Integer Trigonometry}

Let us discuss basic definitions of integer trigonometry introduced in~\cite{Karpenkov2009,OK2008} (for the multi-dimensional trigonometry see~\cite{BKD2023}).

\begin{definition}
Let $p,q$ be co-prime integers with~$q\ge p>0$.
The {\it integer arctangent of $q/p$} is the angle~$\angle AOB$,
where
$$A=(1,0),\quad O=(0,0),\quad\text{and}\quad B=(p,q).$$
We define {\it integer sine}, {\it integer cosine\/} and {\it integer tangent} as
$$
  \isin \angle AOB=q,\quad
  \icos \angle AOB=p,
  \quad\text{and}\quad
  \itan \angle AOB=q/p.
$$
\end{definition}

Note that any rational angle (that is not contained in a line)
is integer congruent to exactly one integer arctangent.
So the values of integer trigonometric functions form in fact
a complete set of invariants of rational angles up to integer congruence.

\vspace{1mm}

The integer sine has a nice geometric definition:
$$
\isin \angle ABC
=\frac{\is (ABC)}{\il(AB)\il(AC)}
$$
which directly corresponds to the Euclidean formula for the area of a parallelogram in terms of the sine of its angle.
The integer tangent is closely related to the geometry of numbers and their connections to continued fractions~\cite{OK2022}.

\section{Integer Circumscribed Circles}
\label{Integer Circumscribed Circles}
In this section we generalise the notion of a circumscribed circle in the context of integer geometry. 

\vspace{2mm}

\begin{definition}\label{circ-def-1}
An {\it integer circumscribed circle\/} of $S\subset \z^2$ is an integer circle that contains $S$.
\end{definition}

In the Euclidean geometry there exists at most one circumscribed circle for a given set~$S$ with~$|S|>2$.
This is not the case in integer geometry where a set can have several circumscribed circles.
The radius of the circumscribed circle is an important quantity in Euclidean geometry.
A suitable replacement for this quantity in integer geometry is the integer circumscribed spectrum.

\begin{definition}
Let $S$ be an integer set.
The set of all radii of integer circumscribed circles of~$S$ is called the {\it integer circumscribed spectrum\/} of~$S$ and denoted by~$\LaZ(S)$.
\end{definition}

\noindent
Note the following.

\begin{proposition}
\label{max-integer}
Let $a,b\in S$ and let $r$ be the radius of a circumscribed circle of $S$. Then $r$ divides $\id(a,b)$.
\end{proposition}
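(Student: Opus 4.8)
The plan is to reduce the statement to the arithmetic description of integer length recalled in Section~\ref{Some Integer Invariants}: the integer length $\il(XY)$ of a vector equals the greatest common divisor of the two coordinate differences of $X$ and $Y$. Write $O$ for the centre of the circumscribed circle. Since both $a$ and $b$ lie on the circle of radius $r$ centred at $O$, we have $\il(Oa)=r$ and $\il(Ob)=r$.

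First I would unwind the hypothesis $\il(Oa)=r$. By the gcd characterisation, $r$ is the greatest common divisor of the two coordinates of the vector $a-O$; in particular $r$ divides each coordinate of $a-O$. The same argument applied to $b$ shows that $r$ divides each coordinate of $b-O$. Next I would subtract: since $a-b=(a-O)-(b-O)$, each coordinate of $a-b$ is a difference of two multiples of $r$, hence itself a multiple of $r$. Therefore $r$ is a common divisor of the two coordinates of $a-b$, so it divides their greatest common divisor, which is precisely $\il(ab)=\id(a,b)$. This yields $r\mid\id(a,b)$.

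There is no genuine obstacle here; the statement is essentially a one-line consequence of the fact that integer length is computed as a gcd of coordinate differences, together with the observation that a common divisor of two integers divides their gcd. The only point worth recording carefully is the passage from ``$r$ divides both coordinates of $a-b$'' to ``$r$ divides $\il(ab)$'', which is immediate since the gcd of two integers is an integer linear combination of them. It may also be worth remarking at this stage that the proposition already confines the integer circumscribed spectrum $\LaZ(S)$ to the common divisors of the pairwise integer distances of points of $S$, which foreshadows the structural results to follow.
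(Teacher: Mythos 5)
Your proposal is correct and matches the paper's own proof: the paper's congruences $a-x\equiv b-x\equiv(0,0)\bmod r$ are exactly your observation that $r$ divides each coordinate of $a-O$ and $b-O$, followed by the same subtraction step. You merely spell out the gcd characterisation of integer length more explicitly than the paper does.
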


\begin{proof}
Let $x$ be the centre of the circumscribed circle of~$S$.
Then
\[a-x\equiv b-x\equiv(0,0)\mod r.\]
Hence $a-b\equiv(0,0)\mod r$,
and therefore $r$ divides $\id(a,b)$. 
\end{proof}

\noindent
This proposition implies that the integer spectrum is bounded:

\begin{corollary}\label{lamda z is boundeds}
    The integer spectrum $\LaZ(S)$ of an integer set $S$ that contains at least 2 points is bounded. \qed 
\end{corollary}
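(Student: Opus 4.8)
The plan is to read the bound off directly from Proposition~\ref{max-integer}. Since $S$ contains at least two points, I would first select two distinct points $a,b\in S$. Their integer distance $\id(a,b)$ equals the integer length of the vector $ab$, which by the remark in Section~\ref{Some Integer Invariants} is the greatest common divisor of the differences of the coordinates of $a$ and $b$; as $a\ne b$ this gcd is a positive integer, so $\id(a,b)\ge 1$.

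Next I would apply Proposition~\ref{max-integer} to this pair: every radius $r$ of a circumscribed circle of $S$ divides $\id(a,b)$. A positive integer has only finitely many positive divisors, each of which is at most the integer itself, so $r\le\id(a,b)$ for every $r\in\LaZ(S)$. Therefore $\LaZ(S)$ is bounded (indeed finite), which is exactly the claim.

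There is no real obstacle here; the corollary is an immediate consequence of the divisibility constraint established in Proposition~\ref{max-integer}. The only point worth emphasising is that the hypothesis $|S|\ge 2$ is genuinely needed: for a single-point set one can always place a centre at any prescribed integer distance from the point, so every positive integer would arise as a radius and the spectrum would be unbounded.
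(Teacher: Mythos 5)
Your proposal is correct and is essentially the paper's own argument: the paper states this corollary with no written proof precisely because, as you note, Proposition~\ref{max-integer} forces every $r\in\LaZ(S)$ to divide $\id(a,b)$ for any two distinct $a,b\in S$, giving $r\le\id(a,b)$. Your added remark on why $|S|\ge 2$ is needed is a correct and worthwhile observation, though not part of the paper's reasoning.
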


The first natural question in the study of integer circumscribed circles is whether $\LaZ(S)$ is empty.
In this section we will introduce a criterion that answers this question for a finite set~$S$ in terms
of projections of~$S$ to integer tori as defined below.
Later in Subsection~\ref{subsection-structure}
we will study the structure of $\LaZ(S)$.

\begin{definition}
For an integer~$m\ge2$, let the $(\modd m)$ {\it integer torus\/} be 
$$
  \mathcal T_m
  =\z^2/\langle(m,0),(0,m)\rangle
  \cong\z/m\z\times\z/m\z.
$$
The \textit{projection} $\pi_m:\z^2\to\calT_m$ is given by $(x,y)\to(x\modd m,y\modd m)$.
\\
We say that two integer points $v_1$ and~$v_2$ in~$\z^2$
are {\it equivalent mod}~$m$ if $\pi_m(v_1)=\pi_n(v_2)$,
denoted by $v_1\equiv v_2\modd m$. 

\end{definition}

In the statement of the main result of this section we use the following terminology.

\begin{definition}
We say that an integer set~$S$
is a {\it covering set\/} of $\calT_m$ if $\pi_m(S)=\calT_m$.
\end{definition}

\begin{definition}
We say that an integer set~$S$ is \textit{tori-transparent\/} if for every integer~$m\ge2$ we have that $S$ is not a covering set of $\calT_m$.
\end{definition}

\begin{remark}
Note that a covering set of an integer torus~$\calT_t$ with~$t\ge2$
must consist of at least $|\calT_t|=t^2\ge4$ points,
hence all integer sets~$S$ with $|S|\le3$ are tori-transparent.
\end{remark}

Now we are ready to write down the existence criterion.

\begin{theorem}
\label{theorem: finite set and torus}
Consider a finite integer set $S\subset\z^2$.
Then the following three statements are equivalent:
\begin{itemize}
\item[(i)] There exists an integer circumscribed circle of~$S$, i.e. 
$$\LaZ(S)\ne\emptyset.$$
\item[(ii)] There exists an integer unit circumscribed circle of~$S$, i.e.
$$1\in \LaZ(S).$$
\item[(iii)] The set $S$ is tori-transparent.
\end{itemize}
\end{theorem}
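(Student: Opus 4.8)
The plan is to prove the cycle (ii)$\Rightarrow$(i)$\Rightarrow$(iii)$\Rightarrow$(ii). The implication (ii)$\Rightarrow$(i) is immediate. Two reductions organize the rest. First, unwinding the definition of an integer circle, a point $P$ lies on the unit circle centred at $O$ precisely when the coordinates of $P-O$ are coprime; more generally $S$ admits a circumscribed circle of radius $r$ centred at $O$ if and only if $r$ divides both coordinates of $s-O$ for every $s\in S$ and each quotient $(s-O)/r$ has coprime coordinates. Second, I claim tori-transparency is equivalent to the requirement that $\pi_p(S)\ne\calT_p$ for every prime $p$. Indeed, whenever $p\mid m$ the natural map $\calT_m\to\calT_p$ is surjective and $\pi_p$ factors through $\pi_m$, so a covering of $\calT_m$ forces a covering of $\calT_p$; hence if transparency fails for some $m$ it already fails for a prime divisor of $m$.

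For (i)$\Rightarrow$(iii) assume $S$ has a circumscribed circle of radius $r$ centred at $O$, and fix a prime $p$. If $p\mid r$ then every $s\in S$ satisfies $s\equiv O\pmod p$, so $\pi_p(S)$ is the single point $\pi_p(O)$ and cannot exhaust $\calT_p$, whose cardinality is $p^2\ge4$. If $p$ does not divide $r$, I show that $\pi_p(O)\notin\pi_p(S)$: were $s\equiv O\pmod p$ for some $s$, then combining this with the fact that $r$ divides both coordinates of $s-O$ and $\gcd(p,r)=1$ gives that $pr$ divides both coordinates of $s-O$, so $p$ divides both coordinates of $(s-O)/r$, contradicting that this vector has coprime coordinates. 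In either case $\pi_p(S)\ne\calT_p$, and by the reduction $S$ is tori-transparent.

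The implication (iii)$\Rightarrow$(ii) is the core of the argument: I must construct a centre $O=(a,b)$ with $\gcd(a-s_1,b-s_2)=1$ for every $s=(s_1,s_2)\in S$. The device that avoids handling infinitely many primes at once is the observation that a prime $p$ can divide some $\gcd(a-s_1,b-s_2)$ only if $p\mid a-s_1$, hence only if $p$ divides $N:=\prod_\alpha(a-\alpha)$, where $\alpha$ ranges over the finitely many distinct first coordinates of $S$. Thus, once $a$ is chosen so that every $a-\alpha$ is nonzero, only the finitely many prime divisors of $N$ constrain $b$. For such a prime $p$ the residues forbidden for $b$ form the fibre $F_p=\{\,s_2\bmod p : s\in S,\ s_1\equiv a\pmod p\,\}$, and I need $|F_p|<p$ so that $b$ can avoid it. Primes $p>|S|$ are automatically safe, since $|F_p|\le|S|<p$; for each of the finitely many primes $p\le|S|$ tori-transparency supplies a residue $(\alpha_p,\beta_p)\notin\pi_p(S)$, and I prescribe $a\equiv\alpha_p\pmod p$ by the Chinese Remainder Theorem, afterwards enlarging $a$ beyond $\max_s s_1$ to guarantee $a-\alpha\ne0$. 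Then the fibre over $a\equiv\alpha_p$ omits $\beta_p$, so $|F_p|<p$ for every prime dividing $N$, and a second application of the Chinese Remainder Theorem yields $b$ avoiding all $F_p$. This $(a,b)$ is the desired unit circumscribed centre.

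The main obstacle is exactly this construction. Tori-transparency is a hypothesis at every prime, while the centre must be committed to once and for all, so a prime-by-prime choice cannot be assembled by the Chinese Remainder Theorem over infinitely many moduli. The decisive point is the split of the primes into the finitely many small ones $p\le|S|$, governed by transparency, and the large ones $p>|S|$, which are free of charge because a fibre of at most $|S|$ residues cannot fill $\z/p\z$; confirming that this dichotomy is exhaustive and that the constructed $(a,b)$ really has coprime coordinate differences at every $s$ is where the argument demands the most care.
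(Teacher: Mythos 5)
Your proof is correct, and its treatment of (ii)$\Rightarrow$(i) and (i)$\Rightarrow$(iii) essentially matches the paper: your prime-by-prime case split ($p\mid r$ forces $\pi_p(S)=\{\pi_p(O)\}$; $p\nmid r$ forces $\pi_p(O)\notin\pi_p(S)$) is the direct form of the paper's contradiction argument, and your reduction of tori-transparency to prime tori is exactly the paper's Lemma~\ref{prime-not-prime} via Lemma~\ref{lemma: covering divisors}. Where you genuinely diverge is in the finiteness mechanism inside the core implication (iii)$\Rightarrow$(ii). The paper fixes the second coordinate as $\beta=b+N!$ and then verifies the non-covering condition over four regimes of the modulus: $m\le N$ via Lemma~\ref{finite-cubes}, primes in $[N+1,\beta]$ by adjusting $\alpha$ with CRT and the pigeonhole bound $|S|\le N<p$ on first coordinates, composites in that range by reduction to prime divisors, and $m>\beta$ because the second-coordinate difference lies in $[1,m-1]$. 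You instead commit the first coordinate $a$ first (congruent mod each prime $p\le|S|$ to the first coordinate $\alpha_p$ of a residue missed by $\pi_p(S)$, and larger than every first coordinate of $S$), and then localize: any prime dividing some $\gcd(a-s_1,b-s_2)$ must divide the nonzero integer $N=\prod_\alpha(a-\alpha)$, so only the finitely many prime divisors of $N$ constrain $b$, and each fibre $F_p$ is avoidable since $|F_p|<p$ — by transparency (the fibre misses $\beta_p$) for $p\le|S|$, and by $|F_p|\le|S|<p$ for large $p$. This product-over-first-coordinates trick replaces the paper's factorial-plus-interval device and eliminates the four-case verification, giving a shorter argument; the one delicate point, which you correctly handle, is that $N$ depends on $a$, so the congruence conditions on $a$ at all primes $p\le|S|$ must be imposed \emph{before} $N$ is known — your small-prime CRT step does exactly that, and your fibre bound then covers every prime divisor of $N$ regardless of which side of $|S|$ it falls on. The paper's version buys an explicit concrete witness $(\alpha,\beta)$ checked against all moduli $m\ge2$ directly; yours buys economy by reducing the infinite family of moduli to the divisors of a single integer.
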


\noindent
We start the proof with the following four lemmas.

\begin{lemma}
\label{pim-pid}
Let $v_1,v_2\in\z^2$.
Consider two integers $d$ and $m$ such that $d$ is a divisor of~$m$.
Then $\pi_m(v_1)=\pi_m(v_2)$
implies $\pi_d(v_1)=\pi_d(v_2)$.
\end{lemma}

\begin{proof}
If $\pi_m(v_1)=\pi_m(v_2)$, then 
$v_1-v_2 \equiv 0 \modd m$ and hence 
$v_1-v_2 \equiv 0 \modd d$, since $d$ is a divisor of $m$.
Therefore $\pi_d(v_1)=\pi_d(v_2)$.
\end{proof}

\begin{lemma}
\label{lemma: covering divisors}
Let $S$ be any subset of~$\z^2$.
If $S$ is a covering set of~$\calT_m$
then it is a covering set of~$\calT_p$ for any prime divisor~$p$ of~$m$.
\end{lemma}

\begin{proof}
The set~$S$ is a covering set of~$\calT_m$, hence
for each~$v\in\z^2$ there exists some $s\in S$ such that $\pi_m(v)=\pi_m(s)$.
By Lemma~\ref{pim-pid}, $\pi_p(v)=\pi_p(s)$.
Hence, $S$ is a covering set of~$\calT_p$.
\end{proof}


\begin{lemma}\label{prime-not-prime}
For any integer set~$S$ the following statements are equivalent:
\begin{itemize}
\item[(i)] The set $S$ is tori-transparent.
\item[(ii)] The set $S$ is not a covering set of any torus $\calT _p$ for prime $p$.
\end{itemize}
\end{lemma}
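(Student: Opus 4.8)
The plan is to establish the two implications separately, the first being immediate and the second following at once from Lemma~\ref{lemma: covering divisors}.

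For the implication (i)~$\Rightarrow$~(ii), I would simply observe that every prime $p$ satisfies $p\ge2$, so the family of tori $\calT_p$ indexed by primes is a subfamily of the tori $\calT_m$ indexed by all integers $m\ge2$. Hence if $S$ fails to be a covering set of $\calT_m$ for \emph{every} $m\ge2$, then in particular it fails to be a covering set of $\calT_p$ for every prime $p$, which is exactly condition~(ii).

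For the converse (ii)~$\Rightarrow$~(i), the natural route is the contrapositive: assuming $S$ is \emph{not} tori-transparent, I would produce a prime $p$ for which $S$ covers $\calT_p$. By the definition of tori-transparency there is some integer $m\ge2$ with $\pi_m(S)=\calT_m$. Since $m\ge2$, it admits at least one prime divisor $p$, and Lemma~\ref{lemma: covering divisors} then guarantees that $S$ is a covering set of $\calT_p$. Thus $S$ fails condition~(ii), which establishes the contrapositive and hence the implication.

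There is no substantive obstacle here: the entire arithmetic content, namely passing from a covering of $\calT_m$ to a covering of $\calT_p$ for a prime divisor $p$ of $m$, has already been absorbed into Lemma~\ref{lemma: covering divisors} (which itself rests on the projection-compatibility recorded in Lemma~\ref{pim-pid}). The only point requiring mild care is the direction of the reduction: one passes from a general modulus to a prime divisor, not the reverse, so phrasing (ii)~$\Rightarrow$~(i) as a contrapositive is precisely what makes the earlier lemma directly applicable.
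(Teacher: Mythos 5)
Your proof is correct and takes essentially the same approach as the paper: (i)$\Rightarrow$(ii) by observing that prime moduli form a subfamily of all moduli $m\ge2$, and (ii)$\Rightarrow$(i) by invoking Lemma~\ref{lemma: covering divisors} for a prime divisor of $m$. The only cosmetic difference is that you phrase (ii)$\Rightarrow$(i) as the contrapositive of the implication, whereas the paper applies the contrapositive of Lemma~\ref{lemma: covering divisors} directly --- logically the identical step.
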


\begin{proof}
{\bf (i)$\implies$(ii)}
If the set $S$ is tori-transparent
then $S$ is not a covering set of any torus~$\calT_m$ for integer~$m\ge2$,
hence $S$ is not a covering set of any torus~$\calT_p$ for prime~$p$.

\vspace{2mm}

\noindent
{\bf (ii)$\implies$(i)}
Consider any integer $m\ge 2$ and let $p$ be a prime divisor of~$m$.
By assumption, $S$ is not a covering set of~$\calT_p$. Hence,
by Lemma~\ref{lemma: covering divisors},
$S$ is not a covering set of~$\calT_m$.
\end{proof}

\begin{lemma}\label{finite-cubes}
Consider a finite, tori-transparent integer set $S$.
Then for any finite subset~$M$ of~$\z$
there exists a point~$v\in\z^2$
such that $\pi_m(v)\not\in \pi_m(S)$ for all~$m\in M$.
\end{lemma}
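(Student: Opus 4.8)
The main obstacle is that the moduli $m\in M$ need not be pairwise coprime, so one cannot directly pick a forbidden residue class in $\calT_m\setminus\pi_m(S)$ for each $m$ and splice the choices together with the Chinese Remainder Theorem. My plan is to replace each modulus by a single prime divisor, reducing everything to a finite collection of \emph{distinct} primes, for which coprimality is automatic.

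The key tool is a monotonicity consequence of Lemma~\ref{pim-pid}: if a prime $p$ divides $m$ and a point $v\in\z^2$ satisfies $\pi_p(v)\notin\pi_p(S)$, then $\pi_m(v)\notin\pi_m(S)$. To see this I would argue by contraposition: $\pi_m(v)\in\pi_m(S)$ means $v\equiv s\pmod m$ for some $s\in S$, and since $p\mid m$, Lemma~\ref{pim-pid} gives $\pi_p(v)=\pi_p(s)\in\pi_p(S)$. Consequently, to keep $v$ away from $\pi_m(S)$ it suffices to keep it away from $\pi_p(S)$ for just one prime divisor $p$ of $m$.

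With this in hand I would, for each $m\in M$ (which I may take to satisfy $m\ge2$, since $\calT_m$ is defined only then), fix a prime divisor $p_m$ of $m$ and form $P=\{p_m:m\in M\}$, a finite set of distinct, hence pairwise coprime, primes. Since $S$ is tori-transparent it is not a covering set of $\calT_p$ for any $p\in P$, so $\pi_p(S)\subsetneq\calT_p$ and I may pick a residue class $w_p=(a_p,b_p)\in\calT_p\setminus\pi_p(S)$. The Chinese Remainder Theorem then yields a point $v=(x,y)\in\z^2$ with $x\equiv a_p$ and $y\equiv b_p$ modulo~$p$ for every $p\in P$, whence $\pi_p(v)=w_p\notin\pi_p(S)$. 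Applying the monotonicity of the previous step to $p=p_m\mid m$ gives $\pi_m(v)\notin\pi_m(S)$ for all $m\in M$, completing the argument. I expect the reduction to primes to be the only genuine step; note that finiteness of $S$ is never used, only that $S$ covers no $\calT_p$, which is precisely tori-transparency.
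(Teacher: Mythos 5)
Your proposal is correct and follows essentially the same route as the paper: reduce each modulus to a prime divisor via the contrapositive of Lemma~\ref{pim-pid}, choose an omitted residue class in each $\calT_p$ using tori-transparency, and splice the choices with the coordinate-wise Chinese Remainder Theorem. The only (harmless) variations are that you fix a single prime divisor per modulus where the paper takes all prime divisors of all elements of~$M$, and your observation that finiteness of~$S$ is never used is accurate.
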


\begin{proof}
Let $\{p_1, \ldots, p_n\}$ be the set of all prime divisors of all elements in $M$.
By Lemma~\ref{lemma: covering divisors}
for every $i=1,\ldots, n$ the set $S$ is not a covering set of~$\calT_{p_i}$.
Hence for every $i=1,\ldots, n$ there exists a point $v_i\in\z^2$ such that  for any $s \in S$ we have
$$
v_i \not \equiv s \modd p_i.
$$
Then by the Chinese Remainder Theorem (applied coordinate-wise) there exists a point $v$ such that 
for every $i=1,\ldots,n$ it holds:
$$
v \equiv v_i \modd p_i.
$$
Hence for every $i=1,\ldots,n$ we have
$$
\pi_{p_i}(v) = \pi_{p_i}(v_i)\notin\pi_{p_i}(S).
$$
Therefore, by Lemma~\ref{lemma: covering divisors}
$\pi_{p_i}(v)\notin \pi_{m}(S)$ for all $m\in M$. 
\end{proof}

\noindent
{\bf Proof of Theorem~\ref{theorem: finite set and torus} (iii)$\implies$(ii).}
The existence of a circumscribed circle and the
property of being a covering set of integer tori $\calT_m$ are invariant under translation by integer vectors. Thus we can assume that the set $S$ is contained in the positive quadrant of $\z^2$. Choose $N$ satisfying the following two conditions:

\begin{itemize}
\item $S$ is completely contained in the box $[1,N]\times [1,N]$;

\item the number of elements in $S$ does not exceed $N$.
\end{itemize}

\vspace{2mm}

Consider $Z=\{1,2\dots,N\}=[1,N]\cap\z$.
By Lemma~\ref{finite-cubes} there exists $(a,b)$ such that $\pi_m(a,b)$ is not in $\pi_m(S)$ for all $m \in Z$. 
 
\vspace{2mm}

Set $\beta=b+N!$.

\vspace{2mm}

Let $p_1, \ldots, p_k$ be all prime numbers in the segment $[N+1,\beta]$. Now note that the size of the set $S$ is $|S|\le N<p_i$. Hence the set of first co-ordinates of points in $S$ has fewer than $p_i$ elements. Therefore, for any $i=1,\ldots, k$ we can choose $c_i$ such that $c_i$ is not equal modulo $p_i$ to the first coordinate of any point in $S$.

\vspace{2mm}

By Chinese Remainder Theorem there exists a solution $\alpha$ of the following system of equations:

$$
\left\{
\begin{array}{l}
\alpha \equiv a \hbox{ mod } N!\\
\alpha \equiv c_i \hbox{ mod } p_i
\end{array} \right.
$$

Then we will show that the point $(\alpha,\beta)$ has the property that $\pi_m(\alpha,\beta) \not \in \pi_m(S)$ for every integer $m$, and therefore $(\alpha,\beta)$ belongs to the unit integer circle with centre at $(x,y)$ for every $(x,y) \in S$.
\begin{itemize}

\item 
If $m\le N$ then $\pi_m(\alpha, \beta)= \pi_m(a,b) \not \in \pi_m(S)$.

\item 
If $m\in[N+1,\beta]$ and $m$ is a prime, say $m = p_i$, then $\alpha \equiv c_i \mod p_i$ and hence is not equal to the first coordinate of any point in $S$ modulo $p_i$ (by the above).
Therefore $\pi_m(\alpha,\beta)\not\in\pi_m(S)$.

\item
If $m\in[N+1,\beta]$ and $m$ is not a prime
then $\pi_m(\alpha,\beta)\not\in\pi_m(S)$
by Lemma~\ref{prime-not-prime} and by the cases considered above.

\item
If $m>\beta$ then the second co-ordinate of any point in $S$ is in the interval $[1,N]$ while $\beta > N! > N$. 
Hence the difference of the second coordinates
is contained in~$[\beta-N,\beta-1]\subset[1,m-1]$
and is therefore not equal to zero modulo~$m$.
Thus $\pi_m(\alpha,\beta)\not\in\pi_m(S)$.\qed
\end{itemize}

\vspace{2mm}

\noindent
{\bf Proof of Theorem~\ref{theorem: finite set and torus} (ii)$\implies$(i).} This is straightforward.
\qed

\vspace{2mm}

\noindent
{\bf Proof of Theorem~\ref{theorem: finite set and torus} (i)$\implies$(iii).}
Assume that there exists a circumscribed circle of~$S$ of some radius~$r$ centered at~$O$. 
Suppose that $S$ is a covering set of~$\calT_m$ for some integer~$m\ge2$.
Let $p$ be a prime divisor of~$m$.
Lemma~\ref{lemma: covering divisors} implies that $S$ is a covering set of~$\calT_p$.

On the one hand there exists~$s_1\in S$
such that $\pi_p(s_1)=\pi_p(O)$.
Therefore, $p$ divides $r$. 

On the other hand there exists~$s_2\in S$
such that $\pi_p(s_2)\ne\pi_p(O)$.
Therefore, $p$ does not divide $\il(s_2,O)=r$.

This is a contradiction.
Hence $S$ is tori-transparent.
\qed

\vspace{2mm}

\begin{remark}
The finiteness of the set $S$ is crucial in Theorem~\ref{theorem: finite set and torus}.
For instance, the set
\[S=\{0,6\}\times\z\]
is an example of an infinite set, for which Theorem~\ref{theorem: finite set and torus} does not hold. 

\vspace{2mm}


Indeed, for every~$m$, the set~$S$ is not a covering set of~$\calT_m$ as $[1,0]_m\not\in\pi_m(S)$ for $m\ne5$ and $[2,0]_m\not\in\pi_m(S)$ for~$m=5$. 
%

\vspace{2mm}

Assume that there exists a circle through all points of~$S$
with centre~$(x,y)$.
The point~$(x,y)$ is at integer distance one from all points of~$\{0\}\times\z$, hence $\gcd(x,y-n) = 1$ for all $n \in \z$ and therefore $x = \pm 1$. Similarly, $(x,y)$ is at integer distance one from all points of~$\{6\}\times\z$, hence $\gcd(x-6,y-n) = 1$ for all $n \in \z$ and therefore $x-6 =\pm 1$.
We arrive at a contradiction.
\end{remark}

Finally let us say a few words
about the $\aff(2,\z)$-invariance of the property of being a covering set of a torus.

\begin{proposition}
Let $S$ be an integer set and $m$ an integer number.
The property of~$S$ to be a covering set of~$\calT_m$
is preserved under $\aff(2,\z)$.
\end{proposition}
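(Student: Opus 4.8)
The plan is to show that any $T\in\aff(2,\z)$ descends to a well-defined \emph{bijection} of the finite set $\calT_m$ that is compatible with the projection $\pi_m$, and then to simply transport the covering property along this bijection.

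First I would write an arbitrary element $T\in\aff(2,\z)$ in the form $T(v)=Av+t$, where $A\in\GL(2,\z)$ and $t\in\z^2$; this is exactly the condition that $T$ be an affine map preserving the lattice~$\z^2$. Since $A$ and $t$ have integer entries, any $v_1,v_2\in\z^2$ with $v_1\equiv v_2\pmod m$ satisfy $T(v_1)-T(v_2)=A(v_1-v_2)\equiv 0\pmod m$. Hence $T$ induces a well-defined map $\bar{T}\colon\calT_m\to\calT_m$ obeying the intertwining relation $\pi_m\circ T=\bar{T}\circ\pi_m$.

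Next I would check that $\bar{T}$ is a bijection of~$\calT_m$. The crucial point is that $A\in\GL(2,\z)$ has $\det A=\pm1$, so its inverse $A^{-1}$ again has integer entries; consequently $T^{-1}(w)=A^{-1}(w-t)$ also lies in $\aff(2,\z)$ and, by the previous step, descends to a map $\bar{T^{-1}}$ on~$\calT_m$. Since $T\circ T^{-1}$ and $T^{-1}\circ T$ are the identity on~$\z^2$, the induced maps satisfy $\bar{T}\circ\bar{T^{-1}}=\bar{T^{-1}}\circ\bar{T}=\mathrm{id}_{\calT_m}$, so $\bar{T}$ is invertible and therefore a bijection of~$\calT_m$.

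Finally I would combine these facts: the intertwining relation gives $\pi_m(T(S))=\bar{T}(\pi_m(S))$, so if $\pi_m(S)=\calT_m$ then surjectivity of~$\bar{T}$ yields $\pi_m(T(S))=\bar{T}(\calT_m)=\calT_m$, i.e.\ $T(S)$ is again a covering set; applying the same argument to~$T^{-1}$ gives the converse. There is no genuine difficulty here, and the only step requiring care is establishing that $\bar{T}$ is a bijection rather than merely a map — this is precisely where one uses that $A$ is invertible \emph{over~$\z$} (equivalently $\det A=\pm1$), and not just over~$\q$.
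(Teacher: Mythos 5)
Your proof is correct and takes essentially the same approach as the paper: both decompose $T(v)=Av+b$ with $A\in\GL(2,\z)$, $b\in\z^2$, and use the integrality of $A^{-1}$ (i.e.\ $\det A=\pm1$) to see that $T$ induces a bijection on residue classes modulo~$m$, so that $\pi_m(T(S))=\calT_m$ exactly when $\pi_m(S)=\calT_m$. Your write-up simply makes explicit the induced bijection $\bar{T}$ of~$\calT_m$, which the paper leaves implicit in its observation that $v_1\equiv v_2 \pmod m$ is equivalent to $Av_1+b\equiv Av_2+b \pmod m$.
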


\begin{proof}
Any element of~$\aff(2,\z)$ can be written as a map $v\mapsto Av+b$
for some matrix~$A\in\GL(2,\z)$ and vector~$b\in\z^2$.
Note that the equation $v_1\equiv v_2 \modd m$ (coordinate-wise)
is equivalent to the equation $Av_1+b\equiv Av_2+b\modd m$.
So the number of points in the image under the projection~$\pi_m$ is preserved under~$\aff(2,\z)$.
\end{proof}

\begin{corollary}\label{aff invariance torus}
The property of a finite set to be tori-transparent  is invariant under $\aff(2,\z)$.
\qed
\end{corollary}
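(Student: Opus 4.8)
The plan is to derive this directly from the preceding Proposition, which asserts that for each fixed $m$ the property of being a covering set of $\calT_m$ is preserved under $\aff(2,\z)$. First I would recall that $\aff(2,\z)$ is a group: every integer affine transformation $g\colon v\mapsto Av+b$ with $A\in\GL(2,\z)$ has an inverse that again lies in $\aff(2,\z)$. Combined with the Proposition, this upgrades the one-directional preservation into a genuine equivalence: for a fixed integer $m\ge2$, the set $S$ is a covering set of $\calT_m$ if and only if $g(S)$ is a covering set of $\calT_m$. Indeed, the proof of the Proposition in fact establishes $|\pi_m(g(S))|=|\pi_m(S)|$, so the covering condition $|\pi_m(S)|=m^2$ transfers in both directions.

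Next I would unwind the definition of tori-transparency. By definition, $S$ is tori-transparent precisely when $S$ fails to be a covering set of $\calT_m$ for every integer $m\ge2$. Applying the equivalence above to each such $m$, the set $g(S)$ fails to be a covering set of $\calT_m$ for every $m\ge2$ exactly when $S$ does. Hence $S$ is tori-transparent if and only if $g(S)$ is tori-transparent, which is the claimed $\aff(2,\z)$-invariance.

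I expect no real obstacle here, as the statement is a formal consequence of the Proposition together with the invertibility of integer affine maps. The only point deserving care is that one must use both directions of the equivalence — preservation under $g$ and under $g^{-1}$ — to conclude invariance rather than mere one-sided preservation; this is exactly where the group structure of $\aff(2,\z)$ enters. I also note that finiteness of $S$ plays no role in the argument: it is inherited from the ambient setting of Theorem~\ref{theorem: finite set and torus} and simply carried along, with the same proof working verbatim for arbitrary integer sets.
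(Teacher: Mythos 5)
Your proposal is correct and is exactly the argument the paper intends: the corollary is stated with an immediate \qed because it follows from the preceding proposition applied to every $m\ge2$, just as you spell out. Your remarks that the proposition's proof already yields the two-sided equivalence $|\pi_m(g(S))|=|\pi_m(S)|$ (so invoking $g^{-1}$ is not strictly needed) and that finiteness of $S$ is never used are both accurate refinements of the same approach.
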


\begin{definition}
\label{def-S/r}
Let $S$ be an integer set and $k$ a positive integer.
We say that $S$ is {\it shift-divisible} by~$k$
if there exists an integer point~$x$ and an integer set~$\hat S$ such that \[S = x + k\hat S.\]
We then say that $\hat S\approx S/k$.
Note that $S$ is shift-divisible by~$k$
if and only if any two points in~$S$ are equivalent modulo~$k$.
Note that the set~$\hat S$ is uniquely defined up to a translation by an integer vector.
We define $S/k$ as the equivalence class of~$\hat S$ under translations by integer vectors.
The property of an integer set to be a covering set of~$\calT_m$ is preserved under translations by integer vectors,
hence we can say that $S/k$ {\it is a covering set of}~$\calT_m$ or is {\it tori-transparent\/} if the set~$\hat S$ has this property.
\end{definition}

\begin{proposition}
\label{prop-shift-div-mult}
Let $S$ be a finite integer set and $a,b$ integers.
If $S$ is shift-divisible by~$a$ and~$b$ then $S$ is shift-divisible by~$\lcm(a,b)$.
\end{proposition}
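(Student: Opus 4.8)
The plan is to reduce the statement to the difference characterization of shift-divisibility recorded in Definition~\ref{def-S/r}, and then to the elementary number-theoretic fact that any common multiple of $a$ and $b$ is automatically a multiple of $\lcm(a,b)$. In other words, the geometric notion of shift-divisibility is converted into a purely arithmetic condition on the coordinates of the pairwise differences of points of~$S$, where the claim becomes transparent.

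First I would invoke the characterization from Definition~\ref{def-S/r}: an integer set is shift-divisible by~$k$ exactly when any two of its points are equivalent modulo~$k$, that is, $s-s'\equiv(0,0)\modd k$ (coordinate-wise) for all $s,s'\in S$. Feeding in the two hypotheses, I obtain that for every pair $s,s'\in S$ each of the two coordinates of the integer vector $s-s'$ is divisible by~$a$ and also by~$b$. Next, working one coordinate at a time, I would apply the defining property of the least common multiple: an integer that is divisible by both~$a$ and~$b$ is divisible by~$\lcm(a,b)$. Consequently each coordinate of $s-s'$ is divisible by~$\lcm(a,b)$, so $s-s'\equiv(0,0)\modd{\lcm(a,b)}$ for all $s,s'\in S$.

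Finally I would read the characterization in the opposite direction to conclude: fixing any $x_0\in S$ and setting $\hat S=\{(s-x_0)/\lcm(a,b):s\in S\}$ exhibits $S=x_0+\lcm(a,b)\,\hat S$ with $\hat S\subset\z^2$, which is precisely shift-divisibility by~$\lcm(a,b)$. There is no serious obstacle here; the argument is short once the right characterization is in hand. The only point that genuinely requires care is the equivalence between shift-divisibility, which a priori demands a single common shift vector~$x$, and the pairwise-congruence condition used above. That equivalence is exactly what Definition~\ref{def-S/r} supplies, so I would lean on it rather than reprove it. I would also note in passing that finiteness of~$S$ plays no role in this argument.
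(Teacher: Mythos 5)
Your proof is correct and follows essentially the same route as the paper's: both reduce shift-divisibility to the pairwise-congruence characterization given in Definition~\ref{def-S/r}, observe that congruence modulo~$a$ and modulo~$b$ forces congruence modulo~$\lcm(a,b)$, and read the characterization back. Your version merely makes explicit the construction of the shifted set~$\hat S$ and the (accurate) remark that finiteness is not needed, both of which the paper leaves implicit.
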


\begin{proof}
If $S$ is shift-divisible by~$a$ and~$b$
then any two points in~$S$ are equivalent modulo~$a$ and modulo~$b$
and therefore equivalent modulo~$\lcm(a,b)$.
Hence $S$ is shift-divisible by~$\lcm(a,b)$.
\end{proof}

\begin{proposition}
\label{prop-radius-r}
Let $S$ be a finite integer set and $r$ an integer.
Then $S$ has a circumscribed circle of radius~$r$
if and only if $S$ is shift-divisible by~$r$ and $S/r$ is tori-transparent.
\end{proposition}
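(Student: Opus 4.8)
The plan is to turn the radius-$r$ circumscription problem for $S$ into the radius-$1$ problem for the scaled-down set $\hat S\approx S/r$, and then invoke Theorem~\ref{theorem: finite set and torus}. The single computational fact I would isolate first is the multiplicativity of the integer length under integer dilation: for every integer vector $w$ and every positive integer $r$ one has $\il(rw)=r\,\il(w)$, because $\il$ is the greatest common divisor of the coordinates.

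For the forward direction, suppose $S$ admits a circumscribed circle of radius~$r$ centred at~$O$. By Proposition~\ref{max-integer} the number $r$ divides $\id(a,b)$ for every pair $a,b\in S$, so all points of~$S$ lie in one residue class modulo~$r$; hence $S$ is shift-divisible by~$r$ and I may write $S=x+r\hat S$. Since $\il(OP)=r$ for each $P\in S$, the vector $P-O$ is divisible by~$r$, so $O$ lies in the same residue class and $O=x+r\hat O$ for an integer point~$\hat O$. Writing $P=x+r\hat s$ gives $P-O=r(\hat s-\hat O)$, whence $r=\il(OP)=r\,\il(\hat O,\hat s)$ and therefore $\il(\hat O,\hat s)=1$ for all $\hat s\in\hat S$. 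Thus $\hat O$ is the centre of a unit circumscribed circle of the finite set~$\hat S$, so $1\in\LaZ(\hat S)$, and Theorem~\ref{theorem: finite set and torus} yields that $\hat S$, i.e. $S/r$, is tori-transparent.

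For the reverse direction I would run the same change of variables backwards. Assume $S$ is shift-divisible by~$r$ and $S/r$ is tori-transparent; write $S=x+r\hat S$ with $\hat S$ finite and tori-transparent. By Theorem~\ref{theorem: finite set and torus} we obtain $1\in\LaZ(\hat S)$, so some $\hat O$ is the centre of a unit circumscribed circle of~$\hat S$, meaning $\il(\hat O,\hat s)=1$ for all $\hat s\in\hat S$. Setting $O=x+r\hat O$, every $P=x+r\hat s\in S$ satisfies $P-O=r(\hat s-\hat O)$ and hence $\il(OP)=r\,\il(\hat O,\hat s)=r$, so $O$ is the centre of a circumscribed circle of~$S$ of radius~$r$.

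I do not expect a genuine obstacle here: the argument is a dilation $P=x+r\hat s$ that matches radius-$r$ circles of~$S$ with radius-$1$ circles of~$\hat S$ bijectively. The only points deserving care are verifying that the centre $O$ really lies in the common residue class modulo~$r$ (so that $\hat O$ is an integer point), which I extract from $\il(OP)=r$ exactly as in the reasoning of Proposition~\ref{max-integer}, and noting that the finiteness of $\hat S$ needed to apply Theorem~\ref{theorem: finite set and torus} is inherited from the finiteness of~$S$.
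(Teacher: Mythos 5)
Your proof is correct and takes essentially the same route as the paper: dilate by $r$ to reduce the radius-$r$ problem for $S$ to the radius-$1$ problem for $\hat S\approx S/r$, and apply Theorem~\ref{theorem: finite set and torus} in both directions. The only cosmetic difference is that the paper uses the centre of the circle itself as the shift point (so $S-x\subset r\z^2$ and shift-divisibility follow in one line), whereas you first obtain shift-divisibility from Proposition~\ref{max-integer} and then separately check that the centre lies in the common residue class modulo~$r$ --- a slightly longer but equally valid bookkeeping.
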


\begin{proof}
Suppose that the set~$S$ has a circumscribed circle~$C$ of radius~$r$ with centre~$x$.
Then $S-x\subset r\z^2$ and $\hat S=(S-x)/r$ is an integer set such that $S=x+r \hat S$,
i.e.\ $S$ is shift-divisible by~$r$ and $S/r\approx\hat S$.
Moreover, $\hat C=(C-x)/r$ is a unit integer circumscribed circle of~$\hat S$,
hence $1\in\LaZ(\hat S)$.
Theorem~\ref{theorem: finite set and torus} implies that $S/r$ is tori-transparent.

Now suppose that $S$ is shift-divisible by~$r$ and $S/r$ is tori-transparent, i.e.\
there exists an integer point~$x$ and an integer tori-transparent set~$\hat S$ such that~$S=x+r \hat S$.
By Theorem~\ref{theorem: finite set and torus},
the set~$\hat S$ admits a unit integer circumscribed circle~$\hat C$.
Then $C=x+r \hat C$ is an integer circumscribed circle of~$S$ of radius~$r$.
\end{proof}

\begin{proposition}
\label{prop-LaZ-mult}
Let $S$ be a finite integer set and $a,b$ integers.
If $a,b\in\LaZ(S)$ then $\lcm(a,b)\in\LaZ(S)$.
\end{proposition}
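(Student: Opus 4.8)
The plan is to take Proposition~\ref{prop-radius-r} as the central tool and verify its two hypotheses for the radius $\ell:=\lcm(a,b)$. That is, to conclude $\ell\in\LaZ(S)$ it suffices to show that $S$ is shift-divisible by~$\ell$ and that $S/\ell$ is tori-transparent. The first hypothesis is immediate: since $a,b\in\LaZ(S)$, Proposition~\ref{prop-radius-r} tells us that $S$ is shift-divisible by both~$a$ and~$b$ (and that $S/a$, $S/b$ are tori-transparent), and then Proposition~\ref{prop-shift-div-mult} upgrades this to shift-divisibility by~$\ell$. All the real content is therefore in establishing that $S/\ell$ is tori-transparent.

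First I would record the elementary arithmetic that makes everything work. Writing $k_a=\ell/a$ and $k_b=\ell/b$, the identity $\ell=ab/\gcd(a,b)$ gives $k_a=b/\gcd(a,b)$ and $k_b=a/\gcd(a,b)$, so that $\gcd(k_a,k_b)=1$. Next I would relate the quotient sets by a dilation. Fixing representatives with $S=x+\ell\hat S=x_a+a\hat S_a$, where $\hat S\approx S/\ell$ and $\hat S_a\approx S/a$, a one-point computation shows $x-x_a=a(\hat s_a-k_a\hat s)$ for a matching pair $\hat s\in\hat S$, $\hat s_a\in\hat S_a$; hence $(x-x_a)/a\in\z^2$ and $\hat S_a=y_a+k_a\hat S$ with $y_a:=(x-x_a)/a$. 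In words: $S/a$ is the $k_a$-dilate of $S/\ell$ up to an integer translation, and symmetrically for $b$.

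With this in hand I would establish tori-transparency of $S/\ell$ via Lemma~\ref{prime-not-prime}, so it is enough to rule out $S/\ell$ being a covering set of $\calT_p$ for every prime~$p$. Suppose for contradiction that $\pi_p(\hat S)=\calT_p$ for some prime~$p$. Since $\gcd(k_a,k_b)=1$, the prime $p$ fails to divide at least one of $k_a,k_b$; say $p\nmid k_a$. Then multiplication by $k_a$ is invertible modulo~$p$, so the affine map $z\mapsto\pi_p(y_a)+k_a z$ is a bijection of~$\calT_p$. Applying it to $\pi_p(\hat S)=\calT_p$ and using $\hat S_a=y_a+k_a\hat S$ yields $\pi_p(\hat S_a)=\calT_p$, i.e.\ $S/a$ is a covering set of~$\calT_p$. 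This contradicts the tori-transparency of $S/a$ established in the first paragraph. Hence $S/\ell$ is not a covering set of any $\calT_p$ and is therefore tori-transparent, and Proposition~\ref{prop-radius-r} delivers a circumscribed circle of radius~$\ell$, i.e.\ $\lcm(a,b)\in\LaZ(S)$.

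The hard part is not any single step but seeing which invariant to exploit: the key is the coprimality $\gcd(\ell/a,\ell/b)=1$, which guarantees that for every prime~$p$ at least one dilation factor is invertible modulo~$p$. That is exactly what allows a hypothetical covering of $\calT_p$ by $S/\ell$ to be transported to a covering by $S/a$ or $S/b$, contradicting the transparency already guaranteed by $a,b\in\LaZ(S)$. The dilation identity $\hat S_a=y_a+k_a\hat S$ and the bijectivity of scalar multiplication on $\calT_p$ when $p\nmid k_a$ are the two routine facts that remain to be checked carefully.
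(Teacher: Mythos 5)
Your proposal is correct and is essentially the paper's own argument: you establish shift-divisibility by $\ell=\lcm(a,b)$ via Propositions~\ref{prop-radius-r} and~\ref{prop-shift-div-mult}, and then rule out a covering of a torus by $S/\ell$ by transporting it through a dilation by one of the coprime cofactors $\ell/a$, $\ell/b$ (the paper's $\hat b=b/\gcd(a,b)$, $\hat a=a/\gcd(a,b)$), contradicting the tori-transparency of $S/a$ or $S/b$. If anything, your reduction to prime tori via Lemma~\ref{prime-not-prime} is more careful than the paper's own wording, which asserts that \emph{every integer} $m$ is coprime with at least one of $\hat a$ and $\hat b$ --- false for composite $m$ such as $m=\hat a\hat b$ --- a small slip that your prime-by-prime version (and, implicitly, the paper's Lemma~\ref{prime-not-prime}) repairs.
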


\begin{proof}
If $a,b\in\LaZ(S)$ then Proposition~\ref{prop-radius-r} implies
that $S$ is shift-divisible by~$a$ and~$b$
and $S/a$, $S/b$ are tori-transparent.
Proposition~\ref{prop-shift-div-mult} implies that $S$ is shift-divisible by~$\lcm(a,b)$.
Let $\hat S=S/(\lcm(a,b))$.
Let $d=\gcd(a,b)$, $\hat a=a/d$ and $\hat b=b/d$,
so that $\gcd(\hat a,\hat b)=1$ and $\lcm(a,b)=d\hat a\hat b$.
The set
\[\hat a\hat S=\hat a(S/(d\hat a\hat b))=S/(d\hat b)=S/b\]
is tori-transparent, hence $\hat S$ is not a covering set of~$\calT_m$
for all~$m$ co-prime with~$\hat a$.
Similarly, the set
\[\hat b\hat S=\hat b(S/(d\hat a\hat b))=S/(d\hat a)=S/a\]
is tori-transparent, hence $\hat S$ is not a covering set of~$\calT_m$
for all~$m$ co-prime with~$\hat b$.
The integers~$\hat a$ and~$\hat b$ are co-prime,
hence every integer~$m$ is co-prime with at least one of~$\hat a$ and~$\hat b$.
Therefore $\hat S$ is not a covering set of any~$\calT_m$ for~$m\ge2$,
i.e.~$\hat S=S/(\lcm(a,b))$ is tori-transparent.
Proposition~\ref{prop-radius-r} implies that $\lcm(a,b)\in\LaZ(S)$.

\end{proof}

\section{Rational Circumscribed Circles}
\label{Rational Circumscribed Circles}

Some sets do not have integer circumscribed circles. However we can extend the definition of integer circumscribed circles to circles with rational radii.
We will see that every finite set has at least one rational circumscribed circle.

\subsection{Definition of a Rational Circumscribed Circle}

\begin{definition}
We call a fraction~$\frac{p}{q}$ {\it irreducible\/} if~$\gcd(p,q)=1$.
\end{definition}

\begin{definition}
\label{rational-circ-def}
Consider an integer set~$S$
and let $p$ and~$q$ be two integers.
We say that $S$ has a {\it rational circumscribed circle} of radius~$\frac{p}{q}$ if the set $qS$ has a circumscribed circle of radius~$p$.
\end{definition}

\begin{definition}
The {\it rational circumscribed spectrum}
$\LaQ(S)$ of an integer set~$S$ is the set of all rational values~$\frac{p}{q}$ such that $S$ admits a rational circumscribed circle of radius~$\frac{p}{q}$.
\end{definition}

\begin{remark}
Since every integer circle is also a rational circle, we have
$$\LaZ(S)\subset\LaQ(S).$$
\end{remark}

\begin{proposition}
Let $S$ be an integer set.
If $\frac{p}{q}$ is an irreducible fraction in~$\LaQ(S)$
and $a,b\in S$
then $p$ divides $\id(a,b)$.
\end{proposition}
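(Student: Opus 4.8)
The plan is to unwind the definition of a rational circumscribed circle so as to reduce the claim to the integer statement already proved in Proposition~\ref{max-integer}. By Definition~\ref{rational-circ-def}, the assertion $\frac{p}{q}\in\LaQ(S)$ means exactly that the integer set $qS$ admits an integer circumscribed circle of radius~$p$. This is the key observation: it transports a purely rational hypothesis about $S$ into an integer hypothesis about the scaled set $qS$, where our existing tools apply directly.

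First I would fix $a,b\in S$ and note that the scaled points $qa,qb$ lie in $qS$. Applying Proposition~\ref{max-integer} to the set $qS$ together with its circumscribed circle of radius~$p$ then yields immediately that $p$ divides $\id(qa,qb)$. The next step is to evaluate this integer distance in terms of $\id(a,b)$. Since the integer length of a vector is the greatest common divisor of the differences of its coordinates, multiplying both endpoints by $q$ multiplies that gcd by~$|q|$; assuming $q>0$ without loss of generality, we obtain
\[
\id(qa,qb)=\il\bigl(q(a-b)\bigr)=q\,\id(a,b).
\]
Hence $p$ divides $q\,\id(a,b)$.

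Finally I would invoke the irreducibility of the fraction: because $\gcd(p,q)=1$, the divisibility $p\mid q\,\id(a,b)$ forces $p\mid\id(a,b)$, which is the desired conclusion. There is no substantial obstacle here; the only point requiring a moment's care is the multiplicativity identity $\id(qa,qb)=q\,\id(a,b)$, which follows at once from the gcd characterisation of integer length recalled in Section~\ref{Some Integer Invariants}, and the coprimality step, which is elementary. Everything else is a direct application of Definition~\ref{rational-circ-def} and Proposition~\ref{max-integer}.
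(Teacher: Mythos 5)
Your proposal is correct and matches the paper's own proof step for step: unwinding Definition~\ref{rational-circ-def} to get $p\in\LaZ(qS)$, applying Proposition~\ref{max-integer} to obtain $p\mid\id(qa,qb)=q\cdot\id(a,b)$, and concluding via $\gcd(p,q)=1$. Your extra remark justifying the identity $\id(qa,qb)=q\,\id(a,b)$ from the gcd characterisation of integer length is a welcome elaboration of a step the paper takes for granted.
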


\begin{proof}
By definition, $\frac{p}{q}\in\LaQ(S)$ implies $p\in\LaZ(q S)$,
i.e.\ the set~$qS$ has an integer circumscribed circle of radius~$p$.
Proposition~\ref{max-integer} implies that $p$ is a divisor of $\id(qa,qb)=q\cdot\id(a,b)$ for any~$a,b\in S$.
As $p$ and~$q$ are co-prime, it follows that $p$ is a divisor of~$\id(a,b)$.
\end{proof}

\noindent
This proposition implies that the rational spectrum is bounded.

\begin{corollary}\label{lamda q is boundeds}
Let $S$ be an integer set, $|S|\ge2$. 
Then the rational spectrum~$\LaQ(S)$ of~$S$
and the set of numerators of irreducible fractions in $\LaQ(S)$ are bounded.\qed 
\end{corollary}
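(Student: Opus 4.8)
The plan is to deduce both boundedness statements directly from the preceding proposition. Since $|S|\ge2$, I would fix two distinct points $a,b\in S$ and set $D=\id(a,b)$; because $a\ne b$, the integer length of the vector $ab$ is at least~$1$, so $D$ is a fixed positive integer. This single pair is all the proposition needs as input.

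First I would handle the numerators. Every element of $\LaQ(S)$ is, by definition, the radius of a rational circumscribed circle and hence positive, so it admits a unique representation as an irreducible fraction $\frac{p}{q}$ with $p,q$ coprime positive integers. Applying the preceding proposition to the chosen pair $a,b$ shows that $p$ divides $D$. Consequently the set of numerators of irreducible fractions in $\LaQ(S)$ is contained in the set of positive divisors of~$D$, which is finite; in particular it is bounded.

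To bound $\LaQ(S)$ itself, I would reuse the representation $\frac{p}{q}$ together with the fact that $q\ge1$. Since $p\mid D$ forces $0<p\le D$, we obtain
\[
0<\frac{p}{q}\le p\le D,
\]
so $\LaQ(S)\subseteq(0,D]$ and is therefore bounded. I expect no genuine obstacle here, as the statement is a direct corollary of the proposition; the only points deserving a moment of care are verifying that $D\ge1$ (so that the divisibility $p\mid D$ actually constrains $p$ rather than being vacuous) and that in the irreducible representation the numerator $p$ is a positive divisor of~$D$ while the denominator satisfies $q\ge1$, which together pin down the upper bound $D$.
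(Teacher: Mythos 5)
Your proof is correct and is exactly the argument the paper intends: the corollary is stated with a \qed as an immediate consequence of the preceding proposition, and your write-up simply makes the implicit steps explicit (fixing distinct $a,b\in S$, noting $D=\id(a,b)\ge1$, deducing $p\mid D$ for the numerator, and using $q\ge1$ to get $\frac{p}{q}\le D$). Nothing is missing and nothing diverges from the paper's route.
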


\begin{proposition}
\label{prop-LaQ-mult}
Let $S$ be a finite integer set.
If $\frac{p_1}{q_1}$ and~$\frac{p_2}{q_2}$ are two irreducible fractions in~$\LaQ(S)$ then
\[\frac{\lcm(p_1q_2,p_2q_1)}{q_1q_2}=\frac{\lcm(p_1,p_2)}{\gcd(q_1,q_2)}\in\LaQ(S).\]
\end{proposition}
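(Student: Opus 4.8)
The plan is to put both fractions over the common denominator $q_1q_2$ and then apply the integer multiplicativity result, Proposition~\ref{prop-LaZ-mult}, to the finite integer set $q_1q_2S$. I would begin by disposing of the purely number-theoretic identity relating the two displayed expressions. Since $\gcd(p_1,q_1)=\gcd(p_2,q_2)=1$, a prime-by-prime comparison of valuations (equivalently: $\gcd(p_1,p_2)\gcd(q_1,q_2)$ divides both $p_1q_2$ and $p_2q_1$ and is coprime to the complementary factors) gives
\[\gcd(p_1q_2,p_2q_1)=\gcd(p_1,p_2)\gcd(q_1,q_2).\]
Together with $\lcm(x,y)\gcd(x,y)=xy$ this yields $\tfrac{\lcm(p_1q_2,p_2q_1)}{q_1q_2}=\tfrac{\lcm(p_1,p_2)}{\gcd(q_1,q_2)}$, so it suffices to prove that $\tfrac{\lcm(p_1q_2,p_2q_1)}{q_1q_2}\in\LaQ(S)$.

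Next I would establish a scaling principle for $\LaZ$: if $r\in\LaZ(T)$ for a finite integer set $T$ and $k\ge1$ is an integer, then $kr\in\LaZ(kT)$. Indeed, if the integer circle of radius $r$ centred at an integer point $O$ contains $T$, then each $t\in T$ satisfies $t-O=r\,w_t$ with $w_t$ a primitive integer vector, so $kt-kO=kr\,w_t$ and $\il(kO,kt)=kr$; thus $kT$ lies on the integer circle of radius $kr$ centred at the integer point $kO$. Applying this with $T=q_1S$, $r=p_1$, $k=q_2$ (the hypothesis $\tfrac{p_1}{q_1}\in\LaQ(S)$ means precisely $p_1\in\LaZ(q_1S)$) gives $p_1q_2\in\LaZ(q_1q_2S)$, and applying it with $T=q_2S$, $r=p_2$, $k=q_1$ gives $p_2q_1\in\LaZ(q_1q_2S)$.

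Finally, as $S$ is finite so is $q_1q_2S$, and Proposition~\ref{prop-LaZ-mult} applied to the two radii $p_1q_2,\,p_2q_1\in\LaZ(q_1q_2S)$ produces $\lcm(p_1q_2,p_2q_1)\in\LaZ(q_1q_2S)$. By the definition of $\LaQ$ this is exactly the statement $\tfrac{\lcm(p_1q_2,p_2q_1)}{q_1q_2}\in\LaQ(S)$, which equals $\tfrac{\lcm(p_1,p_2)}{\gcd(q_1,q_2)}$ by the first step. The only genuinely new ingredient is Proposition~\ref{prop-LaZ-mult}; there is no serious obstacle here, and the step most in need of care is the common-denominator identity, which is precisely where the irreducibility hypotheses $\gcd(p_i,q_i)=1$ are used.
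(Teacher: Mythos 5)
Your proof is correct and takes essentially the same route as the paper's: pass to the common denominator $q_1q_2$, observe that $p_1q_2,\,p_2q_1\in\LaZ(q_1q_2S)$, apply Proposition~\ref{prop-LaZ-mult} to get $\lcm(p_1q_2,p_2q_1)\in\LaZ(q_1q_2S)$, and conclude via the identity $\frac{\lcm(p_1q_2,p_2q_1)}{q_1q_2}=\frac{\lcm(p_1,p_2)}{\gcd(q_1,q_2)}$. The only difference is that you spell out two steps the paper treats as immediate --- the scaling principle $r\in\LaZ(T)\Rightarrow kr\in\LaZ(kT)$ and the $\gcd$/$\lcm$ identity (cited there as ``known in elementary number theory'') --- and both of your verifications are sound.
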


\begin{proof}
If $\frac{p_1}{q_1}\in\LaQ(S)$
then $p_1\in\LaZ(q_1 S)$,
hence $p_1q_2\in\LaZ(q_1q_2 S)$.
Similarly, $p_2q_1\in\LaZ(q_1q_2 S)$.
Proposition~\ref{prop-LaZ-mult} implies $\lcm(p_1q_2,p_2q_1)\in\LaZ(q_1q_2 S)$,
hence
\[\frac{\lcm(p_1q_2,p_2q_1)}{q_1q_2}\in\LaQ(S).\]
Finally, we will use the following identity known in elementary number theory
\[\frac{\lcm(p_1q_2,p_2q_1)}{q_1q_2}=\frac{\lcm(p_1,p_2)}{\gcd(q_1,q_2)}.\qedhere\]
\end{proof}

\subsection{Structure of Rational Spectra}
\label{subsection-structure}

\begin{proposition}
\label{prop-max-min-LaQ}
Let $S$ be a finite integer set.
If $\frac{p}{q}$ and~$\frac{p'}{q'}$ are two irreducible fractions in~$\LaQ(S)$
and $\max(\LaQ(S))=\frac{p}{q}$,
then $p'\,|\,p$ and~$q\,|\,q'$.
\end{proposition}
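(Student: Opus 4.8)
The plan is to combine the two given irreducible fractions using Proposition~\ref{prop-LaQ-mult}, and then exploit the maximality of $\frac{p}{q}$ to squeeze the resulting fraction back to $\frac{p}{q}$. This collapse of inequalities will force the two divisibility relations simultaneously.

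First I would apply Proposition~\ref{prop-LaQ-mult} to the irreducible fractions $\frac{p}{q}$ and $\frac{p'}{q'}$ lying in $\LaQ(S)$, which produces
\[\frac{\lcm(p,p')}{\gcd(q,q')}\in\LaQ(S).\]
All quantities involved are positive, since radii are positive (so $p,p'>0$) and we may normalise the denominators to satisfy $q,q'>0$. Because $\frac{p}{q}=\max(\LaQ(S))$ and the combined fraction belongs to $\LaQ(S)$, maximality gives
\[\frac{\lcm(p,p')}{\gcd(q,q')}\le\frac{p}{q}.\]

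Next I would establish the reverse inequality from elementary monotonicity. The relations $p\,|\,\lcm(p,p')$ and $\gcd(q,q')\,|\,q$ yield $\lcm(p,p')\ge p$ and $\gcd(q,q')\le q$, and since enlarging a numerator and shrinking a denominator both increase a positive fraction, I obtain the chain
\[\frac{\lcm(p,p')}{\gcd(q,q')}\ge\frac{p}{\gcd(q,q')}\ge\frac{p}{q}.\]
Combined with the maximality bound, every term in this chain must coincide. Equality of the first two terms (which share the denominator $\gcd(q,q')$) forces $\lcm(p,p')=p$, i.e.\ $p'\,|\,p$; equality of the last two terms (which share the numerator $p$) forces $\gcd(q,q')=q$, i.e.\ $q\,|\,q'$. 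These are precisely the two assertions.

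The whole argument is a two-sided squeeze, so I do not expect a genuine obstacle; the only point demanding care is the positivity convention, namely fixing $p,q,p',q'>0$ so that the monotonicity of the fractions under enlarging numerators and shrinking denominators is legitimate. Once that normalisation is in place, the entire creative content reduces to the single invocation of Proposition~\ref{prop-LaQ-mult}, after which the conclusion is immediate.
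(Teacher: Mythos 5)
Your proof is correct and takes essentially the same approach as the paper's: a single application of Proposition~\ref{prop-LaQ-mult}, the maximality bound $\frac{\lcm(p,p')}{\gcd(q,q')}\le\frac{p}{q}$, and the observations $\lcm(p,p')\ge p$, $\gcd(q,q')\le q$ forcing both equalities. Your explicit two-sided squeeze merely spells out the one-line inequality comparison that the paper compresses into ``the inequality above can only hold if,'' so there is no substantive difference.
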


\begin{proof}
By Proposition~\ref{prop-LaQ-mult},
the number
\[\frac{\lcm(p,p')}{\gcd(q,q')}\]
is in $\LaQ(S)$, hence
\[\frac{\lcm(p,p')}{\gcd(q,q')}\le\max(\LaQ(S))=\frac{p}{q}.\]
Note that $\lcm(p,p')\ge p$ and $\gcd(q,q')\le q$, hence the inequality above can only hold if 
\[\lcm(p,p')=p,\qquad\gcd(q,q')=q.\]
Therefore $p'\,|\,p$ and $q\,|\,q'$.
\end{proof}

\begin{corollary}
Let $S$ be a finite integer set.
If $\frac{p}{q}$ is an irreducible fraction in~$\LaQ(S)$
and~$\max(\LaQ(S))=\frac{p}{q}$,
then $p$ is the largest possible numerator 
and $q$ is the smallest possible denominator
of an irreducible fraction in~$\LaQ(S)$.
\end{corollary}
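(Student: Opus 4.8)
The plan is to derive this statement as an immediate consequence of Proposition~\ref{prop-max-min-LaQ}, since that proposition already packages exactly the divisibility relations we need. First I would fix the irreducible fraction $\frac{p}{q}$ realizing the maximum $\max(\LaQ(S))=\frac{p}{q}$, and then consider an arbitrary irreducible fraction $\frac{p'}{q'}\in\LaQ(S)$ (the case $\frac{p'}{q'}=\frac{p}{q}$ being trivially admissible). Applying Proposition~\ref{prop-max-min-LaQ} to the pair $\frac{p}{q}$ and $\frac{p'}{q'}$ yields at once the two divisibilities $p'\mid p$ and $q\mid q'$.

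The second step is to convert these divisibility relations into the claimed ordering. Here I would invoke the fact that all the quantities involved are positive integers: numerators of fractions representing positive radii are positive, and denominators of irreducible fractions are positive as well. Consequently, $p'\mid p$ together with $p',p>0$ forces $p'\le p$, and $q\mid q'$ together with $q,q'>0$ forces $q\le q'$. Since $\frac{p'}{q'}$ was an arbitrary irreducible element of $\LaQ(S)$, this shows simultaneously that $p$ dominates every numerator and that $q$ is dominated by every denominator, which is precisely the assertion that $p$ is the largest possible numerator and $q$ the smallest possible denominator among irreducible fractions in $\LaQ(S)$.

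There is essentially no genuine obstacle in this argument, as the substantive content has already been established in Proposition~\ref{prop-max-min-LaQ}; the only point requiring a moment's care is the passage from divisibility to inequality, which depends on the positivity of $p,p',q,q'$. I would therefore state explicitly at the outset that radii, and hence numerators and denominators of their irreducible representations, are taken to be positive, so that the implications $p'\mid p\Rightarrow p'\le p$ and $q\mid q'\Rightarrow q\le q'$ are valid. With this observation in place the corollary follows directly, and it can be recorded that $p$ and $q$ are not merely extremal but are attained simultaneously by the maximizing fraction $\frac{p}{q}$ itself.
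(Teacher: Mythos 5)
Your proof is correct and follows exactly the route the paper intends: the corollary is stated there without proof precisely because it is an immediate consequence of Proposition~\ref{prop-max-min-LaQ}, whose divisibility conclusions $p'\mid p$ and $q\mid q'$ you convert to the inequalities $p'\le p$ and $q\le q'$ via positivity. Your explicit remark on the passage from divisibility to inequality is the only step the paper leaves tacit, and it is handled correctly.
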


\begin{theorem}
\label{theorem:rational circumscribed}
Let $S$ be a finite integer set.
Let $\{t_1,\ldots,t_n\}$ be the set of all primes~$t$ such that $S$ is a covering set of~$\calT_t$.
Let $\tau=\prod\limits_{i=1}^n t_i$.
Then there exists $p\in\z_+$ such that 
\[\LaQ(S)=\left\{\frac{1}{c}\cdot\frac{p}{\tau}\,\bigg|\,c\in\z_{+}\right\}.\]
In fact, $p=\max(\LaZ(\tau S))$, $p/\tau =\max(\LaQ(S))$,
and the greatest common divisor of all integer distances between pairs of points in~$S$ is a multiple of~$p$.

If $S$ is tori-transparent then $\tau=1$,
\[\LaQ(S)=\left\{\frac{p}{c}\,\bigg|\,c\in\z_{+}\right\}\]
and  $p=\max(\LaZ(S))=\max(\LaQ(S))$.
\end{theorem}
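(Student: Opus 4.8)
The plan is to reduce the whole statement to the behaviour of the integer spectrum $\LaZ$ under the scaling maps $T\mapsto cT$, organised around the set $\tau S$. First I would record three elementary scaling facts about tori-transparency, each proved prime-by-prime via Lemma~\ref{prime-not-prime} together with the observation that multiplication by $c$ acts on $\calT_t$ bijectively when $t\nmid c$ and collapses everything to $0$ when $t\mid c$: (a) for a prime $t$, the set $bS$ covers $\calT_t$ if and only if $S$ covers $\calT_t$ and $t\nmid b$, so $bS$ is tori-transparent if and only if $\tau\mid b$; (b) in particular $\tau S$ is tori-transparent, since every prime dividing $\tau$ collapses $\tau S$ to $0$ and every prime not dividing $\tau$ is one for which $S$, hence $\tau S$, fails to cover; (c) if $U$ is tori-transparent then so is $cU$ for every $c\in\z_+$. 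By (b) and Theorem~\ref{theorem: finite set and torus}, $\LaZ(\tau S)$ is nonempty; assuming $|S|\ge2$ (needed for the spectrum to be bounded at all) it is bounded by Corollary~\ref{lamda z is boundeds} and closed under $\lcm$ by Proposition~\ref{prop-LaZ-mult}, so it has a maximum $p\in\z_+$ with $r\mid p$ for every $r\in\LaZ(\tau S)$. This defines $p$, and the inclusion $\supseteq$ is then quick: for each $c$, the set $c\tau S$ is shift-divisible by $p$ and $(c\tau S)/p=c\bigl((\tau S)/p\bigr)$ is tori-transparent by (c), so Proposition~\ref{prop-radius-r} gives $p\in\LaZ(c\tau S)$, i.e.\ $\frac{p}{c\tau}\in\LaQ(S)$.

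The hard inclusion is $\subseteq$. I would take an irreducible $\frac ab\in\LaQ(S)$, so that $a\in\LaZ(bS)$ and in particular $bS$ is tori-transparent; by fact (a) this forces $\tau\mid b$, say $b=\tau c$. It then remains only to prove $a\mid cp$, because then $c':=cp/a\in\z_+$ and $\frac ab=\frac{p}{c'\tau}$, which lands the fraction in the claimed set. Writing $T=\tau S$ and $d=\gcd(a,c)$, I would deduce from $a\in\LaZ(cT)$, using $\gcd(a/d,c/d)=1$, that $T$ is shift-divisible by $a/d$ and that $(cT)/a=(c/d)\,U$ with $U=T/(a/d)$, while $(cT)/a$ is tori-transparent by Proposition~\ref{prop-radius-r}.

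The crux, and the step I expect to be the main obstacle, is upgrading the tori-transparency of $(c/d)U$ to that of $U$ itself. I would argue prime-by-prime: for a prime $s\nmid(c/d)$ a covering of $\calT_s$ by $U$ would, bijectively, give one by $(c/d)U$; while for a prime $s\mid(c/d)$ one has $s\nmid(a/d)$, so a covering of $\calT_s$ by $U$ would, through the relation $T=y+(a/d)U$ and the invertibility of $a/d$ modulo $s$, produce a covering of $\calT_s$ by $T=\tau S$ — and both are impossible. Hence $U$ is tori-transparent by Lemma~\ref{prime-not-prime}, so $a/d\in\LaZ(T)$ by Proposition~\ref{prop-radius-r}, whence $a/d\mid p$ and therefore $a\mid dp\mid cp$. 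This yields $\LaQ(S)=\bigl\{\tfrac1c\cdot\tfrac p\tau\mid c\in\z_+\bigr\}$.

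Finally I would settle the remaining assertions. That $\frac p\tau=\max\LaQ(S)$ is immediate since $\frac{p}{c\tau}$ is largest at $c=1$, and $p=\max\LaZ(\tau S)$ is the definition. For the divisibility claim I first show $\gcd(p,\tau)=1$: if a prime $t\mid\tau$ also divided $p$, then $(\tau S)/p=((\tau/t)S)/(p/t)$ and the tori-transparency of $(\tau S)/p$ would give $p/t\in\LaZ((\tau/t)S)$ by Proposition~\ref{prop-radius-r}, making $(\tau/t)S$ tori-transparent; but $\tau$ is squarefree, so $t\nmid(\tau/t)$, and by fact (a) the set $(\tau/t)S$ does cover $\calT_t$, a contradiction. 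Given coprimality, Proposition~\ref{max-integer} applied to $\tau S$ gives $p\mid\id(\tau a,\tau b)=\tau\,\id(a,b)$ for all $a,b\in S$, hence $p\mid\id(a,b)$, so $p$ divides the greatest common divisor of all pairwise integer distances. The tori-transparent case is just the specialisation $\{t_1,\dots,t_n\}=\emptyset$, $\tau=1$, in which every statement reduces to the displayed one with $\LaZ(S)$ in place of $\LaZ(\tau S)$. Everything outside the third paragraph is bookkeeping with Propositions~\ref{prop-radius-r} and~\ref{prop-LaZ-mult}; the genuinely delicate point is the case split that rescues tori-transparency after dividing by $a/d$.
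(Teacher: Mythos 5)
Your proof is correct, and it takes a genuinely different route from the paper's. The paper works from the top of the rational spectrum downwards: it sets $\frac{p}{q}:=\max(\LaQ(S))$ as an irreducible fraction (existence is tacitly assumed; it needs a small argument, e.g.\ numerators are bounded by Corollary~\ref{lamda q is boundeds} and any fraction exceeding $\frac{1}{\tau}$ then has bounded denominator, so the relevant set is finite), shows $\frac{1}{\tau}\in\LaQ(S)$ to get $q\,|\,\tau$ via Proposition~\ref{prop-max-min-LaQ}, rules out $q\ne\tau$ by observing that $t_1\nmid q$ would make $qS$ a covering set of $\calT_{t_1}$, and then gets the inclusion $\LaQ(S)\subset\bigl\{\frac{p}{c\tau}\,\big|\,c\in\z_+\bigr\}$ in one line from Proposition~\ref{prop-max-min-LaQ}; the coprimality $\gcd(p,\tau)=1$ comes for free from irreducibility of the maximal fraction, whereas in your set-up it must be proved separately --- which you do, correctly. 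You instead define $p:=\max(\LaZ(\tau S))$ directly (a nonempty bounded set of positive integers, closed under $\lcm$, so every element divides the maximum) and prove the hard inclusion by hand, with the prime-by-prime invertibility analysis and the case split on $s\,|\,(c/d)$; this essentially inlines the mechanism behind Propositions~\ref{prop-LaZ-mult} and~\ref{prop-max-min-LaQ} rather than invoking them. One redundancy: since you take $\frac{a}{b}$ irreducible and $c\,|\,b$, you have $d=\gcd(a,c)=1$ automatically, so the $d$-bookkeeping is harmless but unnecessary. What each approach buys: the paper's proof is shorter because it reuses the structural propositions, while yours is more self-contained and identifies $p$ from the outset as the quantity named in the statement. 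Moreover, for the inclusion $\supseteq$ your shift-divisibility argument via Proposition~\ref{prop-radius-r} is actually more careful than the paper's step (4): the paper cites Proposition~\ref{prop-LaQ-mult} applied to $p$ and $\frac{1}{c}$, but that formula yields $\frac{\lcm(pc,1)}{c}=p$ rather than $\frac{p}{c}$, so your version is the correct repair of that step. Both proofs share the implicit assumption $|S|\ge 2$ (for a singleton $\LaZ(S)=\z_+$ and the statement fails), which you at least flag explicitly.
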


\begin{proof}
Let $\frac{p}{q}$ be an irreducible fraction such that $\max(\LaQ(S))=\frac{p}{q}$.
\begin{enumerate}
\item
We will show that $q$ is a divisor of~$\tau$:
We know that $S$ and hence $\tau S$ is not a covering set of~$\calT_t$
for any prime~$t\not\in\{t_1,\ldots,t_n\}$.
For $i=1,\dots,n$, the set $t_i S$ and hence $\tau S$ is not a covering set of~$\calT_{t_i}$.
In summary, the set~$\tau S$ is not a covering set of~$\calT_t$ for every prime~$t$, i.e.\ $\tau S$ is tori-transparent.
Theorem~\ref{theorem: finite set and torus} implies $1\in\LaZ(\tau S)$ and hence $\frac{1}{\tau}\in\LaQ(S)$.
Proposition~\ref{prop-max-min-LaQ} implies $q\,|\,\tau$.

\item
We will now show that $q=\tau$:
We have shown that $q$ is a divisor of~$\tau$.
Suppose that $q\ne\tau$ then $q$ is the product of some but not all of~$t_1,\dots,t_n$.
We can assume without loss of generality that $t_1$ is not a divisor of~$q$.
We know that $S$ is a covering set of~$\calT_{t_1}$ and $\gcd(t_1,q)=1$,
hence $q S$ is also a covering set of~$\calT_{t_1}$
and therefore not tori-transparent.
Theorem~\ref{theorem: finite set and torus} implies that $\LaZ(qS)=\emptyset$.
On the other hand, we know that $\frac{p}{q}\in\LaQ(s)$, hence $p\in\LaZ(q S)$ in contradiction to~$\LaZ(q S)=\emptyset$.
Hence $q=\tau$.

\item
We will next show that
\[\LaQ(S)\subset\left\{\frac{1}{c}\cdot\frac{p}{\tau}\,\bigg|\,c\in\z_{+}\right\}:\]
Consider an irreducible fraction~$\frac{p'}{q'}$ in~$\LaQ(S)$.
We know that the irreducible fraction~$\frac{p}{q}=\frac{p}{\tau}$
is the maximum of~$\LaQ(S)$.
Proposition~\ref{prop-max-min-LaQ} implies that~$p'\,|\,p$ and~ $\tau\,|\,q'$,
hence there exists~$c\in\z_{+}$ such that
\[\frac{p'}{q'}=\frac{1}{c}\cdot\frac{p}{\tau}.\]

\item
We will now show that
\[\left\{\frac{1}{c}\cdot\frac{p}{\tau}\,\bigg|\,c\in\z_{+}\right\}\subset\LaQ(S):\]
Let $c\in\z_+$.
We know that $\frac{p}{\tau}\in\LaQ(S)$,
hence $p\in\LaZ(\tau S)$ and therefore $\LaZ(\tau S)\ne\emptyset$.
Theorem~\ref{theorem: finite set and torus} implies
that the set~$\tau S$ is tori-transparent.
It follows that the set~$c(\tau S)$ is also tori-transparent.
Theorem~\ref{theorem: finite set and torus} implies that $1\in\LaQ(c\tau S)$ and therefore $\frac{1}{c}\in\LaQ(\tau S)$.
We know that $p,\frac{1}{c}\in\LaQ(\tau S)$, hence $\frac{p}{c}\in\LaQ(\tau S)$ according to Proposition~\ref{prop-LaQ-mult}.
Therefore $\frac{p}{c\tau}\in\LaQ(S)$.

\item
Finally, we will show that the greatest common divisor of all integer distances between pairs of points in~$S$ is a multiple of~$p$:
We know that $\frac{p}{q}=\frac{p}{\tau}\in\LaQ(S)$,
hence $p\in\LaZ(\tau S)$ and therefore $\tau S$ has a circumscribed circle of radius~$p$.
It follows that the integer distance between any two points in~$\tau S$ is a multiple of~$p$.
We know that $\gcd(p,\tau)=\gcd(p,q)=1$, hence the integer distance between any two points in~$S$ is a multiple of~$p$.\qedhere
\end{enumerate}
\end{proof}

\begin{remark}
Let $S$ be a finite integer set.
Then
$$\LaZ(S)=\LaQ(S)\cap\z.$$
In the case $\LaZ(S)\ne\emptyset$,
we additionally get the equality
$$\max(\LaZ(S))=\max(\LaQ(S)).$$
\end{remark}

\begin{remark}
There is a similarity between the expression for the rational circumscribed spectrum in Theorem~\ref{theorem:rational circumscribed}
and some formulas for coefficients of Ehrhart polynomials,
see for example~\cite{robins2015}. 
\end{remark}

\vspace{1mm}

\noindent
Note that while Theorem~\ref{theorem:rational circumscribed} states that the greatest common divisor of all integer distances between pairs of points in~$S$ is a multiple of~$p$, it is not necessarily equal to~$p$ as can be seen in the following example: 

\begin{example}
Consider the set
\[S=\{(0,0),(2,0),(0,2),(2,2)\}.\]
On the one hand, the set~$S$ is tori-transparent, so Theorem~\ref{theorem:rational circumscribed} implies that
there exists a divisor~$p$ of all integer distances between pairs of points in~$S$ such that
\[\LaQ(S)=\left\{\frac{p}{c}\ \bigg|\ c\in\z_{+}\right\}.\]
The greatest common divisor of all integer distances between points in~$S$
is~$g=2$,
hence either $p=1$ and $\LaZ(S)=\{1\}$
or~$p=g=2$ and $\LaZ(S)=\{1,2\}$.
On the other hand, we have $S=2\hat S$, where
\[\hat S=\{(0,0),(1,0),(0,1),(1,1)\}.\]
Now $\hat S$ is a covering set of~$\calT_2$, hence 
Theorem~\ref{theorem: finite set and torus} implies $1\not\in\LaZ(\hat S)$
and therefore $2\not\in\LaZ(S)$.
Thus~$p=1\ne g$.
\end{example}

\noindent
To give a more precise description of circumscribed spectra, we will need the following definition:

\begin{definition}
An integer set~$S$ is called {\it primitive}
if it is not shift-divisible by~$k$ for any integer~$k>1$.
\end{definition}

\begin{remark}
Note that a set is primitive if and only if the greatest common divisor of the distances between all pairs of its points equals to one.
\end{remark}

\begin{theorem}
\label{theorem:rational circumscribed precise}
Let $S$ be a finite integer set.
Let $x$ be an integer point, $g$ an integer and $\hat S$ a primitive set such that $S=x+g \hat S$.
Let $\{t_1,\ldots,t_n\}$ be the set of all primes~$t$ such that $\hat S$ is a covering set of~$\calT_t$.
Let $\tau=\prod\limits_{i=1}^n t_i$.
Then the rational circumscribed spectrum of~$S$ is
\[\LaQ(S)=\left\{\frac{1}{c}\cdot\frac{g}{\tau}\ \bigg|\ c\in\z_{+}\right\}.\]
If $\hat S$ is tori-transparent then $\tau=1$ and
\[\LaQ(S)=\left\{\frac{g}{c}\ \bigg|\ c\in\z_{+}\right\}.\]
\end{theorem}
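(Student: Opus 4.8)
The plan is to reduce everything to Theorem~\ref{theorem:rational circumscribed} applied to the primitive set $\hat S$, after first recording two elementary invariance properties of the rational spectrum.

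First I would establish that $\LaQ$ is invariant under integer translations and scales linearly under dilations. For translation, note that for an integer vector $x$ and an integer set $T$ one has $q(x+T)=qx+qT$ with $qx\in\z^2$; since $\LaZ$ is unchanged by translating a set by an integer vector, $\LaZ(q(x+T))=\LaZ(qT)$, and because membership of $\tfrac pq$ in $\LaQ$ depends only on $\LaZ(qT)$, this gives $\LaQ(x+T)=\LaQ(T)$. For the dilation I would use $\il(kv)=k\,\il(v)$: a circumscribed circle of radius $s$ with centre $O$ for a set $U$ dilates to one of radius $ks$ with centre $kO$ for $kU$. Translating this correspondence through Definition~\ref{rational-circ-def}, together with the implication $p\in\LaZ(qT)\Rightarrow kp\in\LaZ(kqT)$ (used implicitly in the proof of Proposition~\ref{prop-LaQ-mult}), yields the clean identity $\LaQ(kT)=k\,\LaQ(T)$ for every $k\in\z_+$. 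The only care needed here is with non-reduced representatives $\tfrac pq$, which I would handle by always passing between $\tfrac pq\in\LaQ(kT)$ and $p\in\LaZ(qkT)$.

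Combining the two properties with the hypothesis $S=x+g\hat S$ gives $\LaQ(S)=\LaQ(g\hat S)=g\,\LaQ(\hat S)$, so it remains only to compute $\LaQ(\hat S)$. Here I would apply Theorem~\ref{theorem:rational circumscribed} directly to $\hat S$. The primes $\{t_1,\dots,t_n\}$ and the product $\tau$ in the present statement are defined through $\hat S$, so they are exactly the data that theorem produces for $\hat S$; it yields some $\hat p\in\z_+$ with $\LaQ(\hat S)=\{\tfrac1c\cdot\tfrac{\hat p}{\tau}\mid c\in\z_+\}$, together with the assertion that the greatest common divisor of all integer distances between pairs of points of $\hat S$ is a multiple of $\hat p$. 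Since $\hat S$ is primitive, that gcd equals $1$, forcing $\hat p=1$. Hence $\LaQ(\hat S)=\{\tfrac1c\cdot\tfrac1\tau\mid c\in\z_+\}$, and multiplying by $g$ gives $\LaQ(S)=\{\tfrac1c\cdot\tfrac g\tau\mid c\in\z_+\}$. The tori-transparent case is just the specialisation $\tau=1$.

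I expect the only real obstacle to be bookkeeping rather than an idea: proving the dilation identity $\LaQ(kT)=k\,\LaQ(T)$ cleanly while keeping track of reducible fractions, and making explicit that $\tau$ must be read off from $\hat S$ rather than from $S$. Indeed, applying Theorem~\ref{theorem:rational circumscribed} to $S$ itself would produce a different product of primes, since dilating $\hat S$ by $g$ destroys covering at every prime dividing $g$ (all coordinates of $g\hat S$ become divisible by such a prime); routing the argument through the primitive set $\hat S$ is precisely what avoids this discrepancy and makes the factor $g$ appear exactly once, in the numerator.
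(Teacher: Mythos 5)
Your proposal is correct and takes essentially the same approach as the paper: apply Theorem~\ref{theorem:rational circumscribed} to the primitive set $\hat S$, use primitivity of $\hat S$ to force $p=1$, and conclude via $\LaQ(S)=\LaQ(g\hat S)=g\cdot\LaQ(\hat S)$. The only difference is one of explicitness—you carefully justify the translation invariance and the dilation identity $\LaQ(kT)=k\,\LaQ(T)$ (including the non-reduced-fraction bookkeeping), which the paper uses without separate proof.
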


\begin{proof}
Theorem~\ref{theorem:rational circumscribed} implies that there exists $p\in\z_+$ such that
\[\LaQ(\hat S)=\left\{\frac{1}{c}\cdot\frac{p}{\tau}\,\bigg|\,c\in\z_{+}\right\},\]
and that $p$ is a divisor of all integer distances between pairs of points in~$\hat S$.
The set~$\hat S$ is primitive, hence the greatest common divisor of all integer distances between pairs of points in~$\hat S$ is equal to~$1$ and therefore $p=1$.
It follows that 
\[
  \LaQ(\hat S)
  =\left\{\frac{1}{c}\cdot\frac{1}{\tau}\,\bigg|\,c\in\z_{+}\right\}
\]
and therefore
\[
  \LaQ(S)
  =\LaQ(g\hat S)
  =g\cdot\LaQ(\hat S)=\left\{\frac{1}{c}\cdot\frac{g}{\tau}\,\bigg|\,c\in\z_{+}\right\}.\qedhere
\]
\end{proof}


\begin{definition}
The \textit{primorial}~$d\#$ of~$d\in\z_+$ is defined
as the product of all prime numbers smaller or equal to~$d$.
\end{definition}

\begin{proposition}
Let $S$ be a finite integer set and $k=|S|$
then
\[\frac1{\lfloor\sqrt{k}\rfloor\#}\in\LaQ(S).\]
\end{proposition}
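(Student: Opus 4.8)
The plan is to read the result off directly from the explicit description of the rational spectrum furnished by Theorem~\ref{theorem:rational circumscribed}. Let $\{t_1,\dots,t_n\}$ be the primes $t$ for which $S$ is a covering set of $\calT_t$, and set $\tau=\prod_{i=1}^n t_i$. Theorem~\ref{theorem:rational circumscribed} gives a $p\in\z_+$ with
\[\LaQ(S)=\left\{\frac{1}{c}\cdot\frac{p}{\tau}\,\bigg|\,c\in\z_{+}\right\}.\]
Two elementary observations about this set will do the work. First, taking $c=p$ shows $\frac{1}{\tau}\in\LaQ(S)$. Second, $\LaQ(S)$ is closed under division by an arbitrary positive integer: dividing $\frac1c\cdot\frac p\tau$ by $m$ simply replaces the parameter $c$ by $cm\in\z_+$. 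Consequently, it suffices to show that $\lfloor\sqrt{k}\rfloor\#$ is a positive integer multiple of $\tau$, for then $\frac{1}{\lfloor\sqrt{k}\rfloor\#}=\frac1{c'}\cdot\frac1\tau\in\LaQ(S)$, where $\lfloor\sqrt{k}\rfloor\#=c'\tau$.

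The key step, and the only genuine content beyond Theorem~\ref{theorem:rational circumscribed}, is the divisibility $\tau\mid\lfloor\sqrt{k}\rfloor\#$. I would establish it through a counting bound on each prime $t_i$. If a prime $t$ satisfies $\pi_t(S)=\calT_t$, then $\pi_t$ restricts to a surjection from $S$ onto $\calT_t$, so $k=|S|\ge|\calT_t|=t^2$; hence $t\le\sqrt{k}$, and since $t$ is an integer, $t\le\lfloor\sqrt{k}\rfloor$. Thus each $t_i$ is a prime not exceeding $\lfloor\sqrt{k}\rfloor$ and therefore occurs as a factor of the primorial $\lfloor\sqrt{k}\rfloor\#$, the product of all primes up to $\lfloor\sqrt{k}\rfloor$. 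As $\tau$ is a product of the distinct primes $t_1,\dots,t_n$, it is squarefree, and so $\tau\mid\lfloor\sqrt{k}\rfloor\#$.

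Combining these, I would conclude by writing $\lfloor\sqrt{k}\rfloor\#=c'\tau$ with $c'\in\z_+$ and invoking the closure of $\LaQ(S)$ under division by $c'$ together with $\frac1\tau\in\LaQ(S)$ to obtain $\frac{1}{\lfloor\sqrt{k}\rfloor\#}\in\LaQ(S)$.

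I expect no real obstacle here: the statement is essentially a corollary of Theorem~\ref{theorem:rational circumscribed}, the only points requiring care being the covering bound $t^2\le k$ and the squarefreeness of $\tau$, both of which are immediate. The single edge case worth checking is a one-element (or, vacuously, empty) set, where $\lfloor\sqrt{k}\rfloor\#=1$ and $\tau=1$, so the claim reduces to $1\in\LaQ(S)$; this holds since a single lattice point always lies on a unit integer circle.
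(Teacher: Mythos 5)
Your proposal is correct and takes essentially the same route as the paper's own proof: both read off from Theorem~\ref{theorem:rational circumscribed} that $\frac{1}{c\tau}\in\LaQ(S)$ for every $c\in\z_+$, and both use the counting bound $t^2=|\calT_t|\le|S|=k$ for a covering prime $t$ to conclude $\tau\mid\lfloor\sqrt{k}\rfloor\#$. Your explicit remarks on the squarefreeness of $\tau$ and the one-point edge case are slightly more careful than the paper's write-up but add nothing structurally different.
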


\begin{proof}
Let $k=|S|$.
Theorem~\ref{theorem:rational circumscribed} implies
that $\frac{1}{n\tau}\in\LaQ(S)$ for every~$n\in\z_+$, 
where $\tau=\prod\limits_{i=1}^n t_i$
and $\{t_1,\ldots,t_n\}$ is the set of all primes~$t$ such that $S$ is a covering set of~$\calT_t$.
Note that if $S$ is a covering set of an integer torus~$\calT_t$
then $t^2=|\calT_t|\le|S|=k$ and therefore $t\le\sqrt{k}$.
It follows that $\{t_1,\ldots,t_n\}$ is a subset of the set of all primes smaller or equal to~$\sqrt{k}$,
hence $\tau$ is a divisor of $\lfloor\sqrt k\rfloor\#$,
i.e.\ $\lfloor\sqrt k\rfloor\#=n\tau$ for some~$n\in\z_+$.
Therefore
\[\frac1{\lfloor\sqrt{k}\rfloor\#}=\frac{1}{n\tau}\in\LaQ(S).\qedhere\]
\end{proof}

\begin{example}
Let~$a,b\ge2$ be integers.
The circumscribed spectra of the integer set
$$G_{a,b}=\{1,\ldots,a\}\times\{1,\ldots,b\}$$
are given by
\[
  \LaZ(G_{a,b})=\emptyset,\quad
  \LaQ(G_{a,b})=\left.\left\{\frac{1}{c}\cdot\frac{1}{(\min(a,b))\#}\,\right|\,c\in\z_+\right\}.
\]

\vspace{1mm}

\noindent
To prove this, note that $G_{a,b}$ is a primitive set.
Theorem~\ref{theorem:rational circumscribed precise} implies that
\[\LaQ(G_{a,b})=\left\{\frac{1}{c}\cdot\frac{1}{\tau}\ \bigg|\ c\in\z_{+}\right\},\]
where $\{t_1,\ldots,t_n\}$ is the set of all primes~$t$ such that $G_{a,b}$ is a covering set of~$\calT_t$
and $\tau=\prod\limits_{i=1}^n t_i$.
The set~$G_{a,b}$ is a covering set for an integer torus~$\calT_t$ if and only if $2\le t\le\min(a,b)$.
Hence the set~$\{t_1,\dots,t_n\}$ consists of all primes smaller or equal to~$\min(a,b)$ and therefore $\tau=(\min(a,b))\#$.
Finally, $\LaZ(S)=\LaQ(S)\cap\z=\emptyset$.
\end{example}

\section{Circumscribed Circles of Polygons}
\label{Section-polygons}

We define an {\it integer circumscribed circle of a polygon}~$P$ as the integer circumscribed circle of the set of vertices of~$P$ in the sense of Definition~\ref{circ-def-1}.
Note that an integer circle is an integer circumscribed circle of~$P$ if and only if all vertices of~$P$ are on the circle (see Figure~\ref{fig:starburst}).
We define a {\it rational circumscribed circle of a polygon}~$P$ as the rational circumscribed circle of the set of vertices of~$P$ in the sense of Definition~\ref{rational-circ-def}.

\vspace{1mm}

In this section we summarise the implications of the results of Theorem~\ref{theorem:rational circumscribed precise} for integer and rational circumscribed circles of polygons.

\subsection{Circumscribed Circles of Segments and Triangles}

An integer segment or triangle always admits a unit circumscribed integer circle.

\begin{proposition}
\label{segment-triangle-circumscribed}
Let $S$ be an integer segment or triangle.
Let $g$ be the greatest common divisor of all integer distances between pairs of vertices of~$S$.
Then the integer circumscribed spectrum~$\LaZ(S)$ consists of all positive divisors of~$g$ and
\[\LaQ(S)=\left\{\left.\frac{g}{c}\,\right|\, c\in\z_+\right\}.\]
In particular if $S$ is a primitive segment or triangle then
\[
  \LaZ(S)=\{1\},
  \quad 
  \LaQ(S)=\left\{\left.\frac{1}{c}\,\right|\,c\in\z_+\right\}.
\]
\end{proposition}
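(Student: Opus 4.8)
The plan is to apply Theorem~\ref{theorem:rational circumscribed precise} directly, reducing everything to a single key geometric fact about segments and triangles: that after removing the common factor~$g$, the resulting primitive set is tori-transparent. First I would write $S=x+g\hat S$ where $\hat S$ is primitive and $x$ is an integer point, as guaranteed by the definition of shift-divisibility applied with $k=g$. Since $\hat S$ is primitive by construction, the machinery of Theorem~\ref{theorem:rational circumscribed precise} applies, and the whole problem collapses to determining the set $\{t_1,\ldots,t_n\}$ of primes $t$ for which $\hat S$ is a covering set of~$\calT_t$.

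The crux of the argument is the claim that this set of primes is empty, i.e.\ that $\hat S$ is tori-transparent. This is where the hypothesis that $S$ is a segment or triangle is essential, so this is the main obstacle. The key observation is that a primitive segment consists of two points whose difference is a primitive vector, and a primitive triangle has at most three vertices. By the Remark following the definition of covering sets, a covering set of~$\calT_t$ with $t\ge2$ must contain at least $t^2\ge4$ points, so no set with $|\hat S|\le3$ can be a covering set of any~$\calT_t$. Hence every segment and every triangle, primitive or not, has $|\hat S|\le3$ and is therefore automatically tori-transparent, giving $\tau=1$.

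With $\tau=1$ established, Theorem~\ref{theorem:rational circumscribed precise} immediately yields
\[
  \LaQ(S)=\left\{\frac{g}{c}\ \bigg|\ c\in\z_+\right\}.
\]
The integer spectrum is then obtained via the relation $\LaZ(S)=\LaQ(S)\cap\z$ recorded in the Remark after Theorem~\ref{theorem:rational circumscribed}: an element $\frac{g}{c}$ is an integer precisely when $c$ is a positive divisor of~$g$, and as $c$ ranges over such divisors, $\frac{g}{c}$ ranges over all positive divisors of~$g$. This proves that $\LaZ(S)$ consists of exactly the positive divisors of~$g$.

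Finally, for the primitive case I would invoke the Remark characterising primitivity: a segment or triangle $S$ is primitive exactly when $g=1$. Substituting $g=1$ into the formulas gives $\LaZ(S)=\{1\}$ and $\LaQ(S)=\{\tfrac{1}{c}\mid c\in\z_+\}$, completing the proof. I expect the only subtlety to be the clean verification of the cardinality bound $|\hat S|\le3$; everything else is a direct specialisation of the earlier general theorems.
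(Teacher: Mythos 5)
Your proof is correct and takes essentially the same route as the paper's: decompose $S=x+g\hat S$ with $\hat S$ primitive, observe that $\hat S$ has at most three points and is therefore tori-transparent (so $\tau=1$), apply Theorem~\ref{theorem:rational circumscribed precise} to get $\LaQ(S)$, and extract $\LaZ(S)$ as $\LaQ(S)\cap\z$. The cardinality bound you flag as the only subtlety is exactly the step the paper itself relies on, justified by the remark that a covering set of $\calT_t$ needs at least $t^2\ge4$ points.
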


\begin{proof}
There exist an integer point~$x$ and a primitive polygon~$\hat S$ such that $S=x+g\hat S$.
The set of vertices of~$\hat S$ consists of at most three points and therefore is tori-transparent.
Theorem~\ref{theorem:rational circumscribed precise} implies that
\[\LaQ(S)=\left\{\frac{g}{c}\ \bigg|\ c\in\z_{+}\right\}.\]
It follows that $\LaZ(S)=\LaQ(S)\cap\z$ consists of all positive divisors of~$g$.
\end{proof}

\noindent
We obtain the following corollary:

\begin{corollary}
If an integer set~$S$ has a circumscribed integer circle of radius~$r$ then the integer distance between any two points of~$S$ is a multiple of~$r$.
\end{corollary}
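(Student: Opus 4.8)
The plan is to reduce the general statement to the two-point case, where the answer is already completely understood, and to exploit the elementary fact that any circumscribed circle of $S$ is automatically a circumscribed circle of every subset of $S$. Since the corollary concerns an arbitrary integer set $S$ carrying a circumscribed integer circle of radius $r$, the natural first move is to fix two arbitrary points $a,b\in S$ and pass to the subset $\{a,b\}$.

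First I would observe that if $C$ is an integer circumscribed circle of $S$ of radius $r$, then in particular $a,b\in C$, so $C$ is an integer circumscribed circle of the two-point set $\{a,b\}$; thus $r\in\LaZ(\{a,b\})$. The shortest route from here is a direct appeal to Proposition~\ref{max-integer}, which already asserts that the radius of any circumscribed circle of $S$ divides $\id(a,b)$ for every pair $a,b\in S$; applying it to each pair yields the claim verbatim.

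Alternatively, to present the corollary as a clean consequence of the polygon results just established, I would invoke Proposition~\ref{segment-triangle-circumscribed} applied to the segment $\{a,b\}$: here the greatest common divisor $g$ of integer distances between pairs of vertices is simply $\id(a,b)$, and the proposition states that $\LaZ(\{a,b\})$ consists of exactly the positive divisors of $g$. Since $r\in\LaZ(\{a,b\})$, it follows that $r$ divides $\id(a,b)$. As $a$ and $b$ were arbitrary, the integer distance between any two points of $S$ is a multiple of $r$.

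There is essentially no obstacle here: the only point requiring care is the elementary observation that passing to a two-element subset preserves the circumscribed-circle property, after which the statement is either a verbatim instance of Proposition~\ref{max-integer} or an immediate specialisation of Proposition~\ref{segment-triangle-circumscribed}.
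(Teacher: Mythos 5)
Your proposal is correct, and your second route (restricting to the segment $\{a,b\}$ and invoking Proposition~\ref{segment-triangle-circumscribed}) is exactly the paper's own proof. Your observation that the statement is also a verbatim instance of Proposition~\ref{max-integer} is accurate as well, so nothing further is needed.
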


\begin{proof}
Consider $A,B\in S$.
Any integer circumscribed circle of $S$ is in particular an integer circumscribed circle of the segment~$AB$,
hence the integer length of the segment~$AB$ is divisible by $r$.
\end{proof}

Let us recall the Euclidean Extended Sine Rule:
for a triangle $ABC$  we have
$$
\frac{|AB|}{\sin \angle BCA }=
\frac{|BC|}{\sin \angle CAB }=
\frac{|CA|}{\sin \angle ABC }=
2R,
$$
where $R$ is the radius of the circumscribed circle.

\vspace{1mm}

\noindent
As was shown in~\cite{OK2008},
the first two of these equalities hold in lattice geometry:
$$
\frac{\il(AB)}{\isin \angle BCA}=
\frac{\il(BC)}{\isin \angle CAB}=
\frac{\il(CA)}{\isin \angle ABC}.
$$
Proposition~\ref{segment-triangle-circumscribed} tells us that there is no natural generalisation for the last equality.
Indeed, the circumscribed spectrum depends entirely on the integer length of the edges of the triangle and does not depend on the angles.

\vspace{2mm}

\noindent
For instance consider two triangles, one with vertices $(0,0)$, $(1,0)$, $(0,1)$ and another with vertices $(0,0)$, $(1,2)$, $(2,1)$.
For both triangles, all edges are of unit integer length.
The sets of integer sines of the angles of these triangles are distinct, for the first triangle all integer sines are equal to~$1$ while for the second triangle all integer sines of the angles are equal to~$3$.
Nevertheless the circumscribed spectra for both triangles coincide.









    

\subsection{Circumscribed Circles of Quadrangles}
We have seen that every triangle has an integer circumscribed circle, however this is no longer true for quadrangles as the following example shows.

\vspace{2mm}

\begin{definition}
An integer polygon $P$ is {\it empty} if the only lattice points contained in $P$ are the vertices. 
\end{definition}

\begin{proposition}
\label{empty square}
An empty integer strictly convex quadrilateral
does not have a circumscribed integer circle.
\end{proposition}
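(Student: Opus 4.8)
The statement claims that an empty, strictly convex integer quadrilateral $Q$ with vertices $A,B,C,D$ has no integer circumscribed circle. By Proposition~\ref{prop-radius-r} and Theorem~\ref{theorem: finite set and torus}, it suffices to show that the vertex set $S=\{A,B,C,D\}$ is \emph{not} tori-transparent, i.e.\ that $S$ is a covering set of some integer torus $\calT_m$. The natural candidate is $m=2$: I would show that $\pi_2(S)=\calT_2$, so that the four vertices hit all four residue classes $(0,0),(1,0),(0,1),(1,1)$ modulo $2$. Since $|\calT_2|=4=|S|$, being a covering set of $\calT_2$ is equivalent to the four vertices being pairwise distinct modulo $2$.

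The key geometric input is a classification of empty convex quadrilaterals up to $\aff(2,\z)$. Since being a covering set of $\calT_2$ is $\aff(2,\z)$-invariant (by the Proposition preceding Corollary~\ref{aff invariance torus}), I may replace $Q$ by any integer-congruent copy. The first step is therefore to establish the normal form: an empty, strictly convex integer quadrilateral is integer congruent to one with vertices
\[
  A=(0,0),\quad B=(1,0),\quad C=(0,1),\quad D=(-1,-1)+(\text{correction}),
\]
or more precisely to a quadrilateral whose first three vertices form an empty (unimodular) triangle $(0,0),(1,0),(0,1)$ with the fourth vertex $D=(a,b)$ pinned down by emptiness and strict convexity. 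I would argue that after sending an edge, say $AB$, to the segment from $(0,0)$ to $(1,0)$ and using emptiness of the triangle $ABC$, the remaining freedom forces $D$ into a short list of lattice points; emptiness of the whole quadrilateral (no interior or boundary lattice points beyond vertices) then cuts this list down to essentially one configuration.

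Once the normal form is in hand, the final step is a direct check: compute $\pi_2(A),\pi_2(B),\pi_2(C),\pi_2(D)$ in the normal form and verify they are the four distinct elements of $\calT_2$. Concretely, $(0,0),(1,0),(0,1)$ already occupy three classes, and emptiness forces the fourth vertex into the class $(1,1)\bmod 2$ — intuitively, if $D$ repeated one of the existing residues modulo $2$, the midpoint of $D$ and that vertex would be an integer point, and strict convexity plus emptiness would place this extra lattice point in the closed quadrilateral, a contradiction. This midpoint argument is the conceptual heart: \emph{two vertices sharing a residue class modulo $2$ have an integer midpoint, which an empty strictly convex quadrilateral cannot contain}. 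Pushing this through for all $\binom{4}{2}=6$ pairs shows all vertices are distinct mod $2$, hence $S$ covers $\calT_2$, hence $S$ is not tori-transparent, hence $\LaZ(S)=\emptyset$.

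**The main obstacle.**
I expect the classification of empty strictly convex quadrilaterals to be the hard part, rather than the modular arithmetic. The midpoint observation already suggests a cleaner route that may avoid a full classification: if I can show directly that no integer midpoint of two vertices lies in the closed quadrilateral $Q$ except by coinciding with a vertex, then strict convexity (which guarantees each midpoint of a diagonal lies strictly inside, and each midpoint of an edge lies on the boundary interior) immediately forces all six pairwise sums to be nonzero modulo $2$ in at least one coordinate. The delicate case is the two diagonals, whose midpoints lie in the interior and would violate emptiness outright if integer; and the four edges, whose midpoints would be non-vertex boundary lattice points, again contradicting emptiness. So the cleanest proof likely bypasses the normal form entirely: \emph{emptiness forbids any integer midpoint of two vertices, which is exactly the statement that the four vertices are pairwise inequivalent modulo $2$, i.e.\ that $S$ covers $\calT_2$.} The only subtlety to handle carefully is confirming that every such midpoint, when integer, indeed lands in the closed region $Q$ — for edges this is immediate, and for diagonals it follows from strict convexity placing the diagonals' intersection, and hence their midpoints' neighborhood, inside $Q$.
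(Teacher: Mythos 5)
Your final argument is correct, but it takes a genuinely different route from the paper. The paper's proof is a two-liner resting on a classification fact asserted without proof: every empty strictly convex integer quadrilateral is integer congruent to the unit coordinate square, whose vertices visibly cover $\calT_2$; Theorem~\ref{theorem: finite set and torus} then finishes. You correctly anticipated this normal-form route in your opening, but the version you settle on --- the midpoint argument --- bypasses the classification entirely: if two of the four vertices were congruent modulo $2$, their midpoint would be an integer point lying either in the relative interior of an edge (a non-vertex boundary lattice point) or, for a diagonal, in the interior of the quadrilateral by strict convexity; either way emptiness is violated. Hence the four vertices are pairwise distinct modulo $2$, and since $|\calT_2|=4$ they cover $\calT_2$, so the vertex set is not tori-transparent and Theorem~\ref{theorem: finite set and torus} gives $\LaZ(S)=\emptyset$ (your additional appeal to Proposition~\ref{prop-radius-r} is unnecessary). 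What your route buys: it is self-contained and elementary, replacing the unproved (though standard) normal form by a one-step parity argument --- indeed the same midpoint trick is the classical proof that an empty convex lattice polygon has at most four vertices, so your argument generalizes at no extra cost, in the spirit of Proposition~\ref{8gon}. What the paper's route buys: brevity, given that the classification of empty quadrilaterals is well known. The one point to state fully precisely is the diagonal case: for a strictly convex quadrilateral the relative interior of a diagonal lies in the open interior of the region, since otherwise it would meet an edge and force three vertices to be collinear --- this is exactly the subtlety you flagged at the end, and it does hold.
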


\begin{proof}
Note that every empty integer strictly convex quadrilateral is integer congruent to the coordinate square $S_1$ with vertices $(0,0)$, $(1,0)$, $(1,1)$ and~$(0,1)$.
The square~$S_1$ is a covering set of~$\calT_2$,
hence it is not tori-transparent.
Theorem~\ref{theorem: finite set and torus} implies that $S_1$ does not admit integer circumscribed circles of any radius.
\end{proof}

\noindent
However some quadrangles have integer circumscribed circles.

\begin{example}
The quadrilateral with vertices $A=(0,0)$, $B=(1,0)$, $C=(0,1)$ and~$D=(2,2)$ has a unit circumscribed circle centred at $(1,1)$.
\end{example}

The situation is similar to the Euclidean geometry, where a quadrangle has a circumscribed circle if and only if its opposite angles add up to~$\pi$.
The lattice version of this rule is as follows:

\begin{proposition}
\label{prop:4gon-t2}
An integer quadrangle has an integer circumscribed circle if and only if the set of its vertices is not a covering set of $\calT_2$.    
\end{proposition}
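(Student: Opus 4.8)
The plan is to reduce the statement directly to the general existence criterion together with a cardinality bound on covering sets. First I would let $S$ denote the four-element set of vertices of the quadrangle. By Theorem~\ref{theorem: finite set and torus}, the set $S$ admits an integer circumscribed circle precisely when $S$ is tori-transparent, so the whole task becomes showing that, for a four-point set, tori-transparency is equivalent to not being a covering set of~$\calT_2$.

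Next I would invoke Lemma~\ref{prime-not-prime}, which states that $S$ is tori-transparent if and only if $S$ is not a covering set of any torus $\calT_p$ for prime~$p$. This reduces the problem to prime moduli and lets me avoid considering composite tori directly. The key observation is then a simple counting argument: a covering set of $\calT_p$ must contain at least $|\calT_p| = p^2$ points. Since $|S| = 4$, the inequality $p^2 \le 4$ forces $p = 2$; indeed for every prime $p \ge 3$ we have $p^2 \ge 9 > 4$, so $S$ can never be a covering set of such a $\calT_p$. Hence the only prime torus that $S$ could conceivably cover is $\calT_2$.

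Combining these steps, $S$ fails to be tori-transparent if and only if it is a covering set of $\calT_2$, and therefore $S$ admits an integer circumscribed circle if and only if it is not a covering set of $\calT_2$, which is the claim.

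I do not anticipate a genuine obstacle: this proposition is essentially the specialization of Theorem~\ref{theorem: finite set and torus} to sets of cardinality four, and all of the content lies in the bound $p^2 \le |S|$. The only point requiring minor care is the implicit convention that a quadrangle has exactly four distinct vertices, so that $|S| = 4$ and the inequality $p^2 \le 4$ may be applied; if one allowed degenerate cases with fewer distinct vertices the set would be even smaller and the same counting argument would apply a fortiori.
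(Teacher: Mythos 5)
Your proof is correct and follows essentially the same route as the paper: reduce to Theorem~\ref{theorem: finite set and torus} and then use the counting bound $|S|=4<t^2=|\calT_t|$ to rule out every torus except $\calT_2$. The only (immaterial) difference is that you pass through primes via Lemma~\ref{prime-not-prime}, whereas the paper applies the cardinality bound directly to all $t>2$, making the prime reduction unnecessary.
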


\begin{proof}
Theorem~\ref{theorem: finite set and torus} implies that a quadrangle admits an integer circumscribed circle
if and only if its set of vertices~$V$ is tori-transparent,
i.e.\ is not a covering set of any integer torus~$\calT_t$ for $t\ge2$.
The set~$V$ cannot be a covering set of~$\calT_t$ for $t>2$ since $|V|=4<t^2=|\calT_t|$.
Hence the set~$V$ is tori-transparent if and only if it is not a covering set of~$\calT_2$.
\end{proof}

\begin{remark}
The conditions for a quadrangle to admit a circumscribed circle can be stated in terms of the  parity of the six integer distances between its pairs of vertices as follows:
An integer quadrangle admits an integer circumscribed circle if and only if at least one of the integer distances between its vertices is even.

\vspace{1mm}

On the other hand, the existence of an integer circumscribed circle is not determined solely by the integer angles of the integer quadrangle.
For example, the angles of the quadrangles with vertices $A(0,0)$, $B(0,1)$, $C(1,1)$, $D(1,0)$ and 
$P(-1,0)$, $Q(-1,1)$, $R(0,1)$, $S(1,0)$
are congruent to each other,
however the latter one admits a circumscribed circle, for example one centered at the origin $O(0,0)$, while the former one does not.

$$
\includegraphics[scale = 1.3]{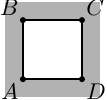}
\qquad\qquad\qquad\qquad
\includegraphics[scale = 1.3]{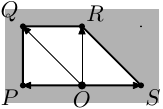}
$$

\end{remark}

\subsection{Circumscribed Circles of General Polygons}

In fact, the argument used in the proof of Proposition~\ref{prop:4gon-t2}
holds for all $n$-gons with~$n\le 8$:

\begin{proposition}\label{8gon}
An integer $n$-gon with~$n\le8$ has an integer circumscribed circle if and only if the set of its vertices is not a covering set of $\calT_2$.\qed    
\end{proposition}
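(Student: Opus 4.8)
The plan is to mimic the proof of Proposition~\ref{prop:4gon-t2} exactly, with the only new ingredient being a counting argument that rules out covering sets of $\calT_t$ for primes $t\ge3$. By Theorem~\ref{theorem: finite set and torus}, an integer $n$-gon admits an integer circumscribed circle if and only if its vertex set $V$ is tori-transparent, and by Lemma~\ref{prime-not-prime} this is equivalent to $V$ not being a covering set of $\calT_t$ for any prime $t$. So it suffices to show that, under the hypothesis $n\le 8$, the set $V$ cannot be a covering set of $\calT_t$ for any prime $t\ge 3$; then tori-transparency reduces to the single condition that $V$ is not a covering set of $\calT_2$.

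The key step is the cardinality bound. A covering set of $\calT_t$ must contain at least $|\calT_t|=t^2$ points. For $t=3$ this requires $|V|\ge 9$, but $|V|=n\le 8$, so $V$ cannot cover $\calT_3$; and for every prime $t>3$ we have $t^2>9>8\ge n$ as well. Hence for all primes $t\ge 3$ the set $V$ fails to be a covering set of $\calT_t$ purely for size reasons. This is the same mechanism used for $t>2$ in Proposition~\ref{prop:4gon-t2}; the extra room here (going up to $n=8$ rather than $n=4$) comes precisely from the fact that the next threshold after $\calT_2$ is $\calT_3$, which needs $9$ points, so any $n$-gon with $n\le 8$ automatically clears every prime torus except possibly $\calT_2$.

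I would then assemble these observations: $V$ is tori-transparent if and only if it is not a covering set of any $\calT_t$ for prime $t$; the cardinality bound eliminates every prime $t\ge 3$; hence $V$ is tori-transparent if and only if it is not a covering set of $\calT_2$. Combining this equivalence with Theorem~\ref{theorem: finite set and torus} gives exactly the claimed criterion. There is no real obstacle here — the statement is a direct generalisation of Proposition~\ref{prop:4gon-t2}, and the only quantity that changes is the arithmetic threshold $8<9=3^2$ that makes $\calT_2$ the sole relevant torus. The bound $n\le 8$ is sharp in the sense that $n=9$ would first permit a covering set of $\calT_3$, which is why the proposition is stated with this constraint.
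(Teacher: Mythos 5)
Your proof is correct and is essentially the paper's intended argument: the paper states the proposition without a separate written proof, remarking that the argument of Proposition~\ref{prop:4gon-t2} carries over, which is exactly your cardinality bound $n\le 8<9=3^2\le t^2$ ruling out covering sets of $\calT_t$ for all $t\ge3$. Your detour through Lemma~\ref{prime-not-prime} is harmless but not needed, since the counting bound already eliminates every integer $t\ge3$, prime or not, just as in the paper's proof for quadrangles.
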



In general, the following statement holds:

\begin{proposition}
An integer $n$-gon admits an integer circumscribed circle if and only if its vertices are not a covering set of~$\calT_t$ for every~$t\le\sqrt n$.
\qed
\end{proposition}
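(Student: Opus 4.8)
The plan is to apply Theorem~\ref{theorem: finite set and torus}, which tells us that an integer $n$-gon $P$ admits an integer circumscribed circle if and only if its vertex set~$V$ is tori-transparent, that is, $V$ is not a covering set of any integer torus~$\calT_t$ for $t\ge2$. By Lemma~\ref{prime-not-prime}, it suffices to check the covering condition on prime tori~$\calT_p$ only. So the whole statement reduces to identifying exactly which prime tori could possibly be covered by a set of $n$ points.

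First I would recall the elementary counting bound used already in Proposition~\ref{prop:4gon-t2} and in the proof of the proposition giving $1/\lfloor\sqrt k\rfloor\#\in\LaQ(S)$: if $V$ is a covering set of~$\calT_t$, then every one of the $t^2$ residue classes in $\calT_t\cong\z/t\z\times\z/t\z$ must be hit by some vertex, so $t^2=|\calT_t|\le|V|=n$, giving $t\le\sqrt n$. Conversely, for any $t>\sqrt n$ we have $t^2>n$, so $V$ cannot be a covering set of~$\calT_t$ for purely cardinality reasons. This dichotomy is the heart of the matter.

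Then the argument is to combine these two observations. If $V$ is a covering set of some $\calT_t$ with $t\ge2$, then by Lemma~\ref{lemma: covering divisors} it is a covering set of~$\calT_p$ for a prime divisor~$p$ of~$t$, and the counting bound forces $p\le\sqrt n$; hence $V$ fails to be tori-transparent precisely when it is a covering set of some torus~$\calT_t$ with $t\le\sqrt n$ (one may restrict to primes, but stating it for all $t\le\sqrt n$ is equivalent and matches the proposition). Reading this through Theorem~\ref{theorem: finite set and torus} yields the claimed equivalence: $P$ has an integer circumscribed circle if and only if $V$ is not a covering set of~$\calT_t$ for any $t\le\sqrt n$.

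I do not expect a genuine obstacle here, since every ingredient is already in place: the proposition is really a packaging of Theorem~\ref{theorem: finite set and torus} together with the trivial cardinality bound $t^2\le n$. The only point requiring a moment's care is the direction of the reduction via Lemma~\ref{prime-not-prime} and Lemma~\ref{lemma: covering divisors}, ensuring that ruling out covering sets for all $t\le\sqrt n$ (equivalently, for all primes $p\le\sqrt n$) genuinely rules out covering sets for every $\calT_m$ with $m\ge2$; this is immediate once one notes that any prime divisor of such an~$m$ that could be covered is itself at most~$\sqrt n$.
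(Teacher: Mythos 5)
Your argument is correct and is essentially the paper's own: the paper leaves this proposition with a \qed precisely because it is the combination of Theorem~\ref{theorem: finite set and torus} with the cardinality bound $t^2=|\calT_t|\le|V|=n$, exactly as spelled out in the proof of Proposition~\ref{prop:4gon-t2}. Your detour through Lemma~\ref{prime-not-prime} and Lemma~\ref{lemma: covering divisors} is harmless but unnecessary, since the counting bound already rules out covering sets of $\calT_t$ for \emph{all} integers $t>\sqrt n$, not just primes.
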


\noindent
{\bf Acknowledgments:}
RS was supported by an LMS Vacation Bursary.

\bibliographystyle{plain} 
\bibliography{ReferencesPreprint} 

\end{document}